\newtheorem{theorem}{Theorem}
\newtheorem{remark}{Remark}[section]
\title{Flying randomly in $\mathbb{R}^d$ with Dirichlet displacements}
\numberwithin{equation}{section}
\author{ Alessandro De Gregorio\footnote{alessandro.degregorio@uniroma1.it}, 
Enzo Orsingher \footnote{enzo.orsingher@uniroma1.it}	\\
Dipartimento di Scienze Statistiche\\
``Sapienza", University of Rome\\
P.le Aldo Moro, 5 - 00185, Rome, Italy}
\begin{document}

\maketitle
\begin{abstract}

Random flights in $\mathbb{R}^d,d\geq 2,$ with Dirichlet-distributed displacements and uniformly distributed orientation are analyzed. The explicit characteristic functions of the position $\underline{\bf X}_d(t),\,t>0,$ when the number of changes of direction is fixed are obtained. The probability distributions are derived by inverting the characteristic functions for all dimensions $d$ of $\mathbb{R}^d$ and many properties of the probabilistic structure of $\underline{\bf X}_d(t),t>0,$ are examined.

If the number of changes of direction is randomized by means of a fractional Poisson process, we are able to obtain explicit distributions for $P\{\underline{\bf X}_d(t)\in d\underline{\bf x}_d\}$ for all $d\geq 2$. A Section is devoted to random flights in $\mathbb{R}^3$ where the general results are discussed.

The existing literature is compared with the results of this paper where in our view the classical Pearson's problem of random flights is resolved by suitably randomizing the step lengths. The random flights where changes of direction are governed by a homogeneous Poisson process are analyzed and compared with the model of Dirichlet-distributed displacements of this work.
\\

{\it Key words: Bessel functions, Dirichlet distributions, fractional Poisson process, Mittag-Leffler functions, hyperspherical coordinates, Random flights, Struve functions, telegraph and wave equations, Wigner law.}
\end{abstract}
\section{Introduction}
The problem of random flights has been appealing for many researchers in different scientific fields. The original formulation is due to the statistician Karl Pearson, who, in a brief letter quoted in {\it Nature}, 1905, wrote: ``A man starts from a point $O$ and walks $a$ yards in a straight line; he then turns
through any angle whatever and walks another $a$ yards in a second straight line. He
repeats this process $n$ times. I require the probability that after $n$ of these stretches
he is at distance between $r$ and $r+\delta r$ from his starting point $O$.''

Pearson's aim was that of modelling the random migration of mosquitos invading cleared jungle regions, while Rayleigh in the same issue of {\it Nature}, observed that Pearson's problem is equivalent to the problem of the superposition of $n$ sound vibrations with unit amplitude and arbitrary phase. The Pearson walk was generalized by Kluyver (1905) who considered steps with arbitrary but deterministic length. Successively, Rayleigh (1919) extended isotropic planar random flights to the space $\mathbb{R}^3$, useful as a possible model of statistical mechanics of a diluted solution of polymeric chains. These random models also emerge in astronomy to describe the stellar dynamics as noted by Chandrasekhar (1943). The author also pointed out the link between random flights and diffusion processes. More recently, Stadje (1987) and Masoliver {\it et al.} (1993) dealt with a two-dimensional random walk moving with constant velocity and with directions uniformly distributed in $[0,2\pi]$.

Over the years many papers, particularly in the physical literature, analyzed the properties of these random models, see Section 2 in Hughes (1995) and references therein.

For the position $\underline{\bf X}_d(t),$ reached at time $t>0$ by the random flights in $\mathbb{R}^d,d\geq 2,$ the conditional distribution
\begin{equation}\label{eq:introd}
P\{\underline{\bf X}_d(t)\in d\underline{{\bf x}}_d|\mathcal{N}_d(t)=n\}
\end{equation}
has been the main object of investigation, where $\mathcal{N}_d(t),t>0,$ is the number of changes of direction recorded up to time $t$. In the first part of the paper the number $\mathcal{N}_d(t),t>0,$ of changes of direction up to time $t$ is assumed to be a fixed number $n$. Franceschetti (2007) has obtained a condition for \eqref{eq:introd} to be uniformly distributed and this relates the dimension $d$ of the space in which  $\underline{\bf X}_d(t),\,t>0,$ develops and the number $n$ of changes of direction. A similar work has been carried out by Garcia-Pelayo (2008) and discussed by Le Caer (2010) who also considered random flights with Dirichlet distributed displacements. Orsingher and De Gregorio (2007) tackled the problem of  random flights in higher spaces by dealing with uniformly hyperspherical distributions of the orientation  of motion. Beghin and Orsingher (2010) considered a planar random motion where the deviations occur at odd-order Poisson events (or at even-Poisson events thus implicitly assuming that the displacements take a Dirichlet distribution). 

In the original Pearson's formulation of the problem of random flights the length of steps was deterministic and the probability distribution of $\underline{\bf X}_d(t),\,t>0,$ lead to integrals which could not be explicitly worked out (see Watson, 1922, pag.421). For random flights in $\mathbb{R}^d$ with uniformly distributed steps it was possible to obtain the distribution \eqref{eq:introd} for $d=2,4,$ and for arbitrary values of $n$ (and also the non-conditional distributions).

We remark that in all papers mentioned above the deviations are separated by exponentially distributed time lapses. This corresponds to assuming instants of changes of direction uniformly distributed under the condition that the number of Poisson events $N(t)$ is fixed. This basic assumption permits us to obtain the explicit distribution of the position of the moving particle only in the spaces $\mathbb{R}^2$ and $\mathbb{R}^4$. In $\mathbb{R}^2$ the distribution was obtained by recursive arguments by Stadje (1987), by Masoliver {\it et al.} (1993). In $\mathbb{R}^4$ the explicit distribution was obtained in Orsingher and De Gregorio (2007). This unlucky circumstance (for concrete purposes the space $\mathbb{R}^3$ is clearly the most important one) is here overcome by the assumption that the intervals between successive changes of direction exhibit a Dirichlet distribution.

By choosing a suitable basic parameter of the Dirichlet random displacements for each $d$, we are able here to obtain the explicit distribution \eqref{eq:introd} of $\underline{\bf X}_d(t),\,t>0,$ for all $n$. Furthermore, the distribution has the universal isotropic form
\begin{equation}\label{eq:introd2}
h(\underline{\bf x}_d)=A(c^2t^2-||\underline{\bf x}_d||^2)^b
\end{equation}
where $b$ depends on $n$ and $d$, while $A$ is the necessary normalizing factor (depending on $t$). In our view this solves the classical Pearson's problem of random flights for all values of $d$ in Euclidean spaces. We show below that functions of the form \eqref{eq:introd2} are solution to some $d$-dimensional telegraph equation. 

By resorting to fractional Poisson processes we can randomize the distribution with respect to $n$, thus arriving at the non-conditional distributions of $\underline{\bf X}_d(t),\,t>0,$ for all $d\geq 2$. Furthermore, we can extract the distribution in $\mathbb{R}^2$ and $\mathbb{R}^4$ known so far as particular cases of those derived here with Dirichlet distributed steps.

We now describe the random flights analyzed in this paper. A random walker, starting from the origin of a frame of reference, moves in $d$-dimensional real space, with $d\geq 2$, at finite speed (denoted by $c$) according to the following rules. We assume that in the time interval $[0,t]$, $n$ changes of direction of motion are recorded. We suppose that the instants at which the random walker changes direction are $0<t_1<t_2<\cdots<t_n<t$, $n\geq 1$, and denote the length of time separating these instants by $\tau_j=t_j-t_{j-1}$, $1\leq j\leq n+1$ with $t_0=0,\,t_{n+1}=t$. Each displacement has an orientation defined in $\mathbb{R}^d$ by the angles $(\theta_1,\theta_2,...,\theta_{d-2},\phi)$ and we suppose that $0\leq \theta_j\leq \pi$, $0\leq \phi\leq 2\pi$ with joint law equal to
\begin{equation}\label{eq:jointdis1}
g(\theta_1,....,\theta_{d-2},\phi)=\frac{\Gamma(\frac d2)}{2\pi^{\frac d2}}\sin^{d-2}\theta_1\sin^{d-3}\theta_2\cdots \sin\theta_{d-2}. 
\end{equation}
The law \eqref{eq:jointdis1} tells us that the direction is uniformly chosen on the hypersphere of $\mathbb{R}^d$ with radius one. An important assumption is that the random vector $(\tau_1,...,\tau_n)$ (representing the length of the $n$ displacements) possesses joint density equal to
\begin{equation}\label{eq:jointdis2}
f_1(\tau_1,...,\tau_n)=\frac{\Gamma((n+1)(d-1))}{(\Gamma(d-1))^{n+1}}\frac{1}{t^{(n+1)(d-1)-1}}\prod_{j=1}^{n+1}\tau_j^{d-2},
\end{equation}
where $0<\tau_j<t-\sum_{k=0}^{j-1}\tau_k$, $1\leq j\leq n$, and $\tau_{n+1}=t-\sum_{j=1}^n\tau_j$. The distribution $\eqref{eq:jointdis2}$ is a rescaled Dirichlet distribution, with parameters $(d-1,...,d-1),\, d\geq 2$. The probability distribution \eqref{eq:jointdis2} can be obtained as a marginal integral of the multiple uniform law as shown in Lachal {\it et al.} (2006).

We will treat also the random flights with intermediate step length having joint distribution
\begin{equation}\label{eq:jointdis3}
f_2(\tau_1,...,\tau_n)=\frac{\Gamma((n+1)(\frac d2-1))}{(\Gamma(\frac d2-1))^{n+1}}\frac{1}{t^{(n+1)(\frac d2-1)-1}}\prod_{j=1}^{n+1}\tau_j^{\frac d2-2}.
\end{equation}
where $0<\tau_j<t-\sum_{k=0}^{j-1}\tau_k$, $1\leq j\leq n$, and $\tau_{n+1}=t-\sum_{j=1}^n\tau_j$, which is a Dirichlet distribution (suitably rescaled) with parameters $(\frac d2-1,....,\frac d2-1)$, where $d\geq 3$.

The model treated here consists of the triple $(\underline{\theta},\underline{\tau},\mathcal{N}_d(t))$ of independent vectors where $\underline{\theta}=(\theta_1,....,\theta_{d-2},\phi)$ is the orientation of displacements (with uniform law \eqref{eq:jointdis1}), $\underline{\tau}=(\tau_1,...,\tau_n)$ represents the displacements and $\mathcal{N}_d(t)$ is the number of changes of orientation. In the models analyzed in Stadje (1987) and Orsingher and De Gregorio (2007), $\underline{\theta}$ has law coinciding with \eqref{eq:jointdis1},  $\underline{\tau}$ is uniformly distributed and $\mathcal{N}_d(t)$ is a homogeneous Poisson process.

Random processes with intertimes with non-uniform distribution have occasionally been considered in the literature (see Di Crescenzo, 2002 and  Pogorui and Rodriguez-Dagnino, 2005). Random motions in $\mathbb{R}^c$ with $n+1$ direction have been analyzed by Samoilenko (2001), Lachal (2006) and Lachal {\it et al}. (2006). 

Recalling that the motion develops at constant velocity $c$, the process described by the $d$-dimensional random flight $\underline{\bf X}_d(t)=(X_1(t),...,X_d(t)),t>0,$ has components equal to

\begin{align}\label{polar}
&X_d(t)=c\sum_{j=1}^{n+1}\tau_j
\sin\theta_{1,j}\sin\theta_{2,j}\cdot\cdot\cdot\sin\theta_{d-2,j}\sin\phi_{j}\notag\\
&X_{d-1}(t)=c\sum_{j=1}^{n+1}\tau_j
\sin\theta_{1,j}\sin\theta_{2,j}\cdot\cdot\cdot\sin\theta_{d-2,j}\cos\phi_{j}\notag\\
&\cdot\cdot\cdot\\
&X_2(t)=c\sum_{j=1}^{n+1}\tau_j
\sin\theta_{1,j}\cos\theta_{2,j}\notag\\
&X_1(t)=c\sum_{j=1}^{n+1}\tau_j \cos\theta_{1,j}.\notag
\end{align}

  \begin{figure}[t]
  \begin{center}
\includegraphics[angle=0,width=0.67\textwidth]{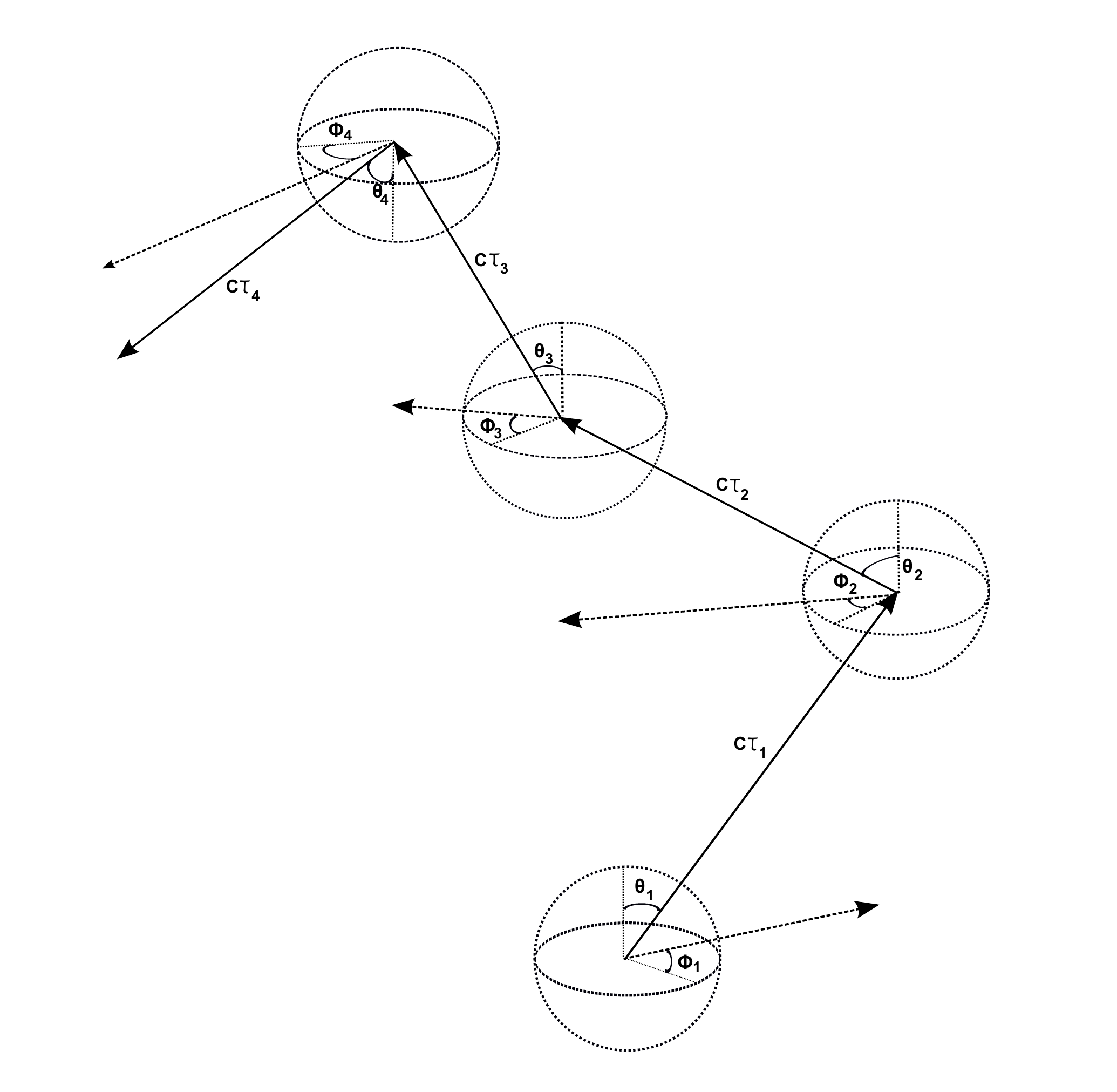}
\caption{A sample path of a random flight in $\mathbb{R}^3$ consisting of four displacements with angles $(\theta_j,\phi_j)$ defining each direction.}
\end{center}
\end{figure}

  \begin{figure}[t]
  \begin{center}
\includegraphics[angle=0,width=0.48\textwidth]{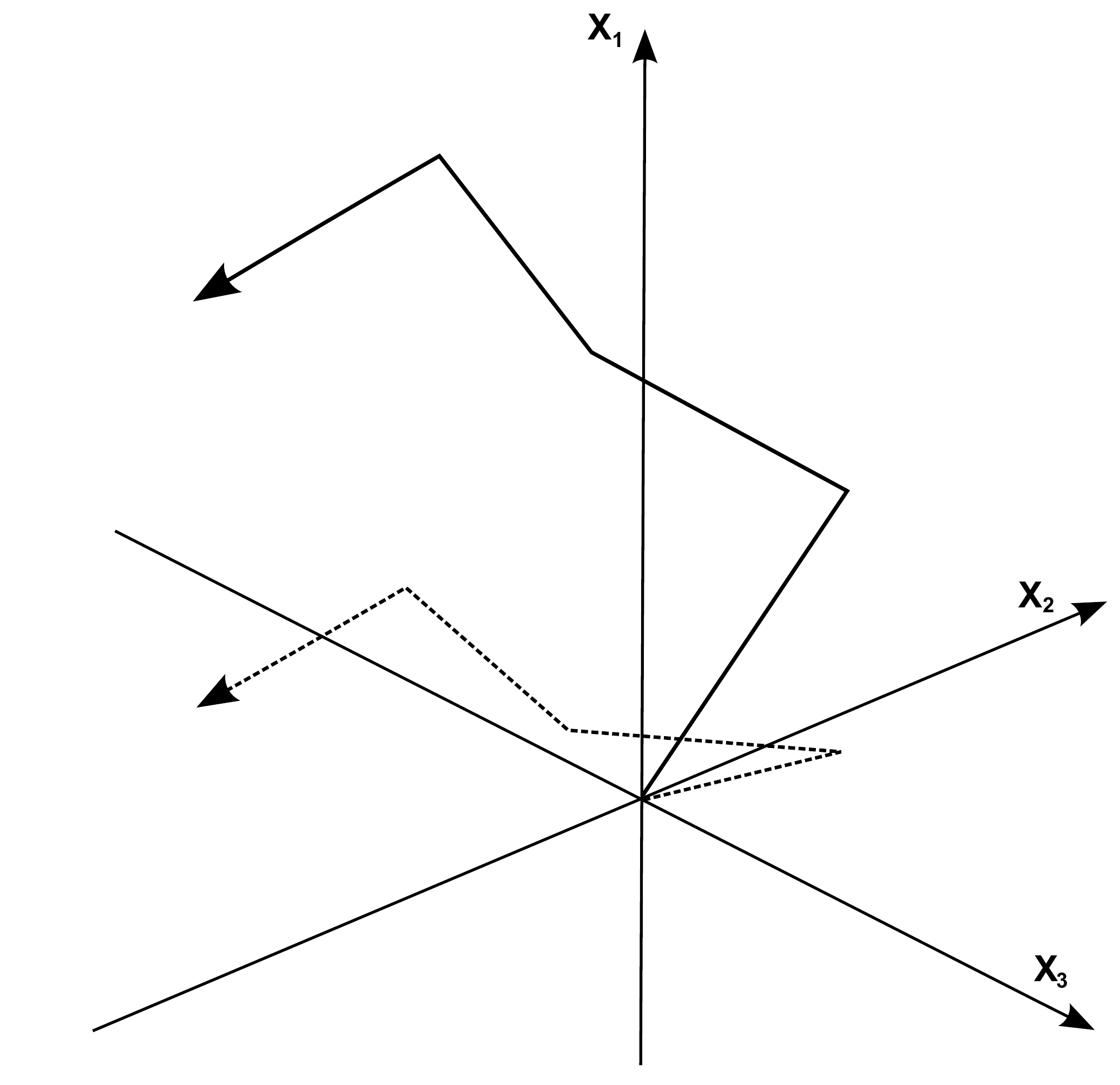}
\caption{The sample path of Figure 1 with its projection on $\mathbb{R}^2$ (dotted line).}
\end{center}
\end{figure}

If we consider a random flight in $\mathbb{R}^d$ with time intervals between successive changes of orientation distributed as \eqref{eq:jointdis2}, we will show that the the position of the moving point $\underline{\bf X}_d(t)=(X_1(t),...,X_d(t))$ at time $t$, has distribution 
\begin{equation}\label{eq:condlawint}
p_{\underline{\bf X}_d }(\underline{\bf x}_d,t;n) =\frac{\Gamma(\frac{n+1}{2}(d-1)+\frac12)}{\Gamma(\frac{n}{2}(d-1))}\frac{(c^2t^2- ||\underline{\bf x}_d||^2)^{\frac{n}{2}(d-1)-1}}{\pi^{d/2}(ct)^{(n+1)(d-1)-1}},
\end{equation}
with $ d\geq 2$, $||\underline{\bf x}_d||<ct$ and $n\geq 1$. For $n=0$ the moving particle's position is uniformly distributed on the surface of the $d$-dimensional hypersphere $\mathcal{H}_{ct}^d$ with radius $ct$, and zero elsewhere. We remark that for $d=2$, we can extract from \eqref{eq:condlawint} the planar distribution
$$p_{\underline{\bf X}_2}(\underline{\bf x}_2,t;n)=\frac{n}{2\pi (ct)^n}(c^2t^2-||\underline{\bf x}_2||^2)^{\frac n2-1},\quad ||\underline{\bf x}_2||<ct,$$
see formula (1.1) in Orsingher and De Gregorio (2007). We also observe that \eqref{eq:condlawint} has the remarkable structure \eqref{eq:introd2} for $b=\frac n2(d-1)-1$ and the normalizing value equal to $$A=\frac{1}{\pi^{d/2}(ct)^{(n+1)(d-1)-1}}\frac{\Gamma(\frac{n+1}{2}(d-1)+\frac12)}{\Gamma(\frac{n}{2}(d-1))}$$
for all dimensions $d\geq 2$ and all numbers of changes of direction $n$.

Furthermore, if we assume \eqref{eq:jointdis3} as the joint distribution for the time length intervals $(\tau_1,...,\tau_n)$, the position $\underline{\bf Y}_d(t)=(Y_1(t),...,Y_d(t)),\,t>0,\,d\geq 3,$ of the random flight  has probability density which reads
\begin{equation}\label{eq:condlawint2}
 p_{\underline{\bf Y}_d}(\underline{\bf y}_d,t;n)
=\frac{\Gamma((n+1)(\frac d2-1)+1)}{\Gamma(n(\frac d2-1))}\frac{(c^2t^2- ||\underline{\bf y}_d||^2)^{n(\frac d2-1)-1}}{\pi^{d/2}(ct)^{2(n+1)(\frac d2-1)}},
\end{equation}
with $d\geq 3$, $||\underline{\bf y}_d||<ct$. For $d=4$, formula \eqref{eq:condlawint2} simplifies into
$$p_{\underline{\bf  Y}_4}(\underline{\bf y}_4,t;n)=\frac{n(n+1)}{\pi^2(ct)^{2n+2}}(c^2t^2- ||\underline{\bf y}_4||^2)^{n-1},$$
which coincides with the result (3.2) of Orsingher and De Gregorio (2007).

The technical reason for which this important and simple result is possible is due to the semigroup property of the Bessel functions \eqref{sem} (applicable with Dirichlet distribution \eqref{eq:jointdis2}) and \eqref{sem2} (applicable in the case of Dirichlet distribution \eqref{eq:jointdis3}). If we do not harmonize the order of the Dirichlet distribution with the dimension of space $d$, we get entangled in highly complicated formulae as Beghin and Orsingher (2010) showed. Throughout the paper the orientation (with distribution  \eqref{eq:jointdis1}) and the intertimes (with probability laws  \eqref{eq:jointdis2} and  \eqref{eq:jointdis3}) are assumed independent.

We also observe that \eqref{eq:condlawint2} corresponds to the uniform law if
$n=\frac{2}{d-2}$ 
and this implies that for $d=3$ we need two changes of direction in order to obtain the uniform law, while for $d=4$ one change of orientation leads to the same result. In the previous case \eqref{eq:condlawint} the condition to have a uniform distribution is
$n=\frac{2}{d-1}$
and this means that for $d=2$, $n$ must be equal one in order to obtain this strange and unexpected result.

For $n=\frac{1}{d-1}$ the law \eqref{eq:condlawint} reduces to the form
\begin{equation}\label{eq:lawinf}
p_{\underline{\bf X}_d}\left(\underline{\bf x}_d,t;\frac{1}{d-1}\right)=\frac{1}{\pi^{d/2}(ct)^{d-1}}\frac{\Gamma(\frac{d+1}{2})}{\Gamma(\frac{d}{2})}(c^2t^2- ||\underline{\bf x}_d||^2)^{-\frac{1}{2}},
\end{equation}
and this is valid for $d=2$. For $n=\frac{1}{d-2}$ the second law \eqref{eq:condlawint2} takes again the form \eqref{eq:lawinf} and this holds for $d=3$. Therefore for these two cases the distribution displays a singular behavior near the surface of the sphere. In general, the bigger the number $n$ of changes of direction the more concentrated around the starting point the distributions \eqref{eq:condlawint} and \eqref{eq:condlawint2} are. This is because the sample paths coil up around the origin since they are subject to contradictory, fragmented displacements. 

From \eqref{eq:condlawint} and \eqref{eq:condlawint2} we can extract the distributions of the random flights on all subspaces $\mathbb{R}^m,1\leq m<d,$ which preserve the structure \eqref{eq:introd2}. For $m=1$, we are able to obtain the distribution of the projection of the random flight on the line by means of order statistics (as previously elaborated for telegraph processes, see De Gregorio {\it et al.}, 2005, and some planar extensions with a finite number of directions, Leorato and Orsingher, 2004). The marginal distributions of \eqref{eq:condlawint} and \eqref{eq:condlawint2} can be interpreted as the probability law of the random flight described by the shadow on the subspace $\mathbb{R}^m,1\leq m<d$, or, equivalently, as a motion where changes of direction imply also a random change of the velocity in the subsequent displacements.

The projection of the distributions \eqref{eq:condlawint} and \eqref{eq:condlawint2} on the line yields
\begin{equation}\label{eq:projline}
f_{X_1}^d(x_1,t;n)=\frac{\Gamma((n+1)(d-1))}{(\Gamma(\frac{n+1}{2}(d-1)))^2}\frac{(c^2t^2-x_1^2)^{\frac{n+1}{2}(d-1)-1}}{(2ct)^{(n+1)(d-1)-1}}, 
\end{equation}
with $d\geq 2, |x_1|<ct,$ and
\begin{equation}\label{eq:projline2}
f_{Y_1}^d(y_1,t;n)=\frac{\Gamma(2(n+1)(\frac d2-1)+1)}{(\Gamma((n+1)(\frac d2-1)+\frac12))^2}\frac{(c^2t^2-y_1^2)^{(n+1)(\frac d2-1)-\frac12}}{(2ct)^{2(n+1)(\frac d2-1)}},
\end{equation}
with $ d\geq 3, |y_1|<ct.$ Particularly interesting is the case $d=3$ in \eqref{eq:projline}, because it yields the conditional distribution of the telegraph process $T(t),t>0$, that is
$$P\{T(t)\in dx|N(t)=2n+1\}=\frac{(2n+1)!}{(n!)^2}\frac{(c^2t^2-x^2)^n}{(2ct)^{2n+1}},\quad |x|<ct,$$
where $N(t),t>0,$ is a homogeneous Poisson process.
From \eqref{eq:projline2}, for $d=3$ and $n=2k-2,k\geq 1,$ we arrive instead at
$$P\{T(t)\in dy|N(t)=2k\}=\frac{(2k)!ct}{k!(k-1)!}\frac{(c^2t^2-y^2)^{k-1}}{(2ct)^{2k}},\quad |y|<ct,$$
which coincides with formula (2.18) of De Gregorio {\it et al.} (2005). For odd values of $d$ we can derive from \eqref{eq:projline} the distributions of reinforced alternating processes on the line described in De Gregorio {\it et al.} (2005).

The results \eqref{eq:condlawint} and \eqref{eq:condlawint2} depend on the dimension $d$ of the space $\mathbb{R}^d$ and on the number $n$ of changes of direction. In order to obtain unconditional distributions (as in $\mathbb{R}^2$ and $\mathbb{R}^4$ in earlier work see Stadje, 1987 and Orsingher and De Gregorio, 2007), we here assume that the number of changes of direction is randomized and has the structure of a fractional Poisson process independent from the Dirichlet r.v.'s representing the step lengths and independent also from the r.v.'s representing the orientation of each displacement. Therefore, we average distribution \eqref{eq:condlawint} by means of the following distribution of a fractional Poisson process $\mathcal{N}_d(t),t>0,$ 
\begin{equation}\label{int:lawgenpoi}
P\left\{\mathcal{N}_d(t)=n\right\}=\frac{1}{E_{\frac{d-1}{2},\frac d2}(\lambda t)}\frac{(\lambda t)^n}{\Gamma((\frac{d-1}{2})n+\frac d2)} ,
\end{equation}
with $\lambda >0,\,d\geq 2,\, n=0,1,...$,
while for \eqref{eq:condlawint2}, we must take the fractional Poisson process $\mathcal{M}_d(t),t>0,$ with probability distribution
\begin{equation*}
P\left\{\mathcal{M}_d(t)=n\right\}=\frac{1}{E_{\frac{d}{2}-1,\frac d2}(\lambda t)}\frac{(\lambda t)^n}{\Gamma((\frac{d}{2}-1)n+\frac d2)}, 
\end{equation*}
with $\lambda >0,d\geq 3,\,n=0,1,...$.
By combining \eqref{eq:condlawint} and \eqref{int:lawgenpoi} we obtain the probability law
\begin{equation*}
\frac{P\{\underline{\bf X}_{d}(t)\in d\underline{\bf x}_{d}\}}{\prod_{j=1}^ddx_j
}=\frac{\lambda t}{\pi^{\frac d2}}\frac{(c^2t^2-||\underline{\bf x}_{d}||^2)^{\frac{d-1}{2}-1}}{(ct)^{2(d-1)-1}}\frac{E_{\frac {d-1}{2},\frac {d-1}{2}}\left(\frac{\lambda t(c^2t^2-||\underline{\bf x}_{d}||^2)^{\frac {d-1}{2}}}{(ct)^{(d-1)}}\right)}{E_{\frac{d-1}{2},\frac d2}(\lambda t)}
\end{equation*}
which, for $d=3$, simplifies into
$$\frac{P\{\underline{\bf X}_{3}(t)\in d\underline{\bf x}_{3}\}}{\prod_{j=1}^3dx_j}=\frac{\lambda}{\pi^{\frac32}c^3t^2}\frac{e^{\frac{\lambda}{c^2t}(c^2t^2-||\underline{\bf x}_{3}||^2)}}{E_{1,\frac32}(\lambda t)},$$
which is similar to the unconditional distribution in $\mathbb{R}^4$ obtained in Orsingher and De Gregorio (2007). The functions appearing in the formulae above are called two-parameter Mittag-Leffler functions
$$E_{\nu,\beta}=\sum_{k=0}^\infty\frac{x^k}{\Gamma(\nu k+\beta)},\quad x\in\mathbb{R},\nu>0,\beta>0,$$
and play a central role in fractional calculus.

\section{Exact probability distributions for a random flight in $\mathbb{R}^d$}

We start our analysis by presenting the characteristic functions of the vector processes $\underline{\bf X}_d(t),t>0,$ and $\underline{\bf Y}_d(t),t>0,$ defined as \eqref{polar}, when the number of displacements is fixed and equal to $n+1,n\geq 1,$ and the changes of orientation are separated by random times $\tau_1,...,\tau_n$ with distribution \eqref{eq:jointdis2} and \eqref{eq:jointdis3} respectively. We denote by $\underline{\alpha}_d=(\alpha_1,...,\alpha_d)\in \mathbb{R}^d$ while $||\underline{{\bf x}}_d||=\sqrt{\sum_{j=1}^dx_j^2}$ and $<\underline{\alpha}_d,\underline{{\bf x}}_d>=\sum_{j=1}^d\alpha_jx_j$ represent the Euclidean distance and the scalar product, respectively.

\begin{theorem}\label{th1}
For the vector process $\underline{\bf X}_d(t)=(X_1(t),...,X_d(t)),t>0,$ with intermediate time lengths with joint distribution \eqref{eq:jointdis2}, the characteristic function reads 
\begin{equation}\label{cf}
E\left\{e^{i<\underline{\alpha}_d,\underline{\bf X}_d(t)>}\right\}=\frac{2^{\frac{n+1}{2}(d-1)-\frac12}\Gamma(\frac{n+1}{2}(d-1)+\frac12)}{(ct||\underline{\alpha}_d||)^{\frac{n+1}{2}(d-1)-\frac12}}J_{\frac{n+1}{2}(d-1)-\frac12}(ct||\underline{\alpha}_d||),
\end{equation}
where $d\geq 2$. For the vector process $\underline{\bf Y}_d(t)=(Y_1(t),...,Y_d(t)),t>0,$ with intertime lengths having joint distribution \eqref{eq:jointdis3}, the characteristic function is
\begin{equation}\label{cf2}
E\left\{e^{i<\underline{\alpha}_d,\underline{\bf Y}_d(t)>}\right\}=\frac{2^{(n+1)(\frac d2-1)}\Gamma((n+1)(\frac d2-1)+1)}{(ct||\underline{\alpha}_d||)^{(n+1)(\frac d2-1)}}J_{(n+1)(\frac d2-1)}(ct||\underline{\alpha}_d||),
\end{equation}
with $d\geq 3$, where
$$J_\nu(x)=\sum_{k=0}^\infty(-1)^k\left(\frac{x}{2}\right)^{2k+\nu}\frac{1}{k!\Gamma(k+\nu+1)},\quad x,\nu\in\mathbb{R},$$
is the Bessel function.
\end{theorem}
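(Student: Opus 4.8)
\emph{Proof strategy.} The plan is to compute the characteristic function by conditioning on the vector of step durations and exploiting the mutual independence (postulated in the description of the model) of $\underline{\tau}=(\tau_1,\dots,\tau_n)$, of the orientations, and of the orientations of the successive displacements from one another. Write $\rho=\|\underline{\alpha}_d\|$, let $\underline{V}_j$ be the $j$-th unit orientation vector with hyperspherical law \eqref{eq:jointdis1}, so that by \eqref{polar} the $j$-th displacement is $c\tau_j\underline{V}_j$. By the tower property and independence,
\begin{equation*}
E\left\{e^{i<\underline{\alpha}_d,\underline{\bf X}_d(t)>}\right\}=E_{\underline{\tau}}\left\{\prod_{j=1}^{n+1}E\left[e^{ic\tau_j<\underline{\alpha}_d,\underline{V}_j>}\mid\tau_j\right]\right\}.
\end{equation*}

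The first step is to identify the inner conditional expectation with a Bessel function. By the rotational invariance of \eqref{eq:jointdis1}, only the component of $\underline{V}_j$ along $\underline{\alpha}_d$ matters, so the conditional expectation equals $\frac{\Gamma(d/2)}{\sqrt{\pi}\,\Gamma(\frac{d-1}{2})}\int_0^{\pi}e^{ic\tau_j\rho\cos\theta}\sin^{d-2}\theta\,d\theta$, which by Poisson's integral representation of $J_{\frac d2-1}$ is $2^{\frac d2-1}\Gamma(\frac d2)(c\tau_j\rho)^{-(\frac d2-1)}J_{\frac d2-1}(c\tau_j\rho)$. Substituting this and then inserting the Dirichlet density \eqref{eq:jointdis2}, the characteristic function becomes a constant times the integral over the simplex $\{\tau_j>0,\ \sum_{j=1}^{n+1}\tau_j=t\}$ of $\prod_{j=1}^{n+1}\frac{J_{\frac d2-1}(c\tau_j\rho)}{(c\tau_j\rho)^{\frac d2-1}}\,\tau_j^{d-2}$.

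The core of the argument is the evaluation of this simplex integral, which I would carry out by induction on the number $n$ of changes of direction, using the recursive structure of the Dirichlet law: conditioning on $\tau_{n+1}$ (a rescaled $\mathrm{Beta}(d-1,n(d-1))$ variable), the vector $(\tau_1,\dots,\tau_n)$ is again a rescaled $\mathrm{Dirichlet}(d-1,\dots,d-1)$ on the simplex of sum $t-\tau_{n+1}$, so $\sum_{j=1}^{n}\tau_j\underline{V}_j$ is distributed as $\underline{\bf X}_d(t-\tau_{n+1})$ with $n-1$ changes of direction, independently of the last displacement $\tau_{n+1}\underline{V}_{n+1}$. Granting the formula for $n-1$ changes of direction, one is left with the single one-dimensional integral
\begin{equation*}
\int_0^t \frac{J_{\frac d2-1}(c\tau\rho)}{(c\tau\rho)^{\frac d2-1}}\,\frac{J_{\frac n2(d-1)-\frac12}(c(t-\tau)\rho)}{(c(t-\tau)\rho)^{\frac n2(d-1)-\frac12}}\,\tau^{d-2}(t-\tau)^{n(d-1)-1}\,d\tau,
\end{equation*}
which is precisely the instance of the Bessel semigroup identity \eqref{sem} (a Sonine--Gegenbauer type finite integral): it reproduces $J_{\frac{n+1}{2}(d-1)-\frac12}(ct\rho)$ divided by the appropriate power of $ct\rho$, up to a $\Gamma$-factor. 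Collecting the constants generated at each of the $n$ steps together with the $(2^{\frac d2-1}\Gamma(\frac d2))^{n+1}$ from the single-displacement computation, one checks that they telescope exactly into $2^{\frac{n+1}{2}(d-1)-\frac12}\Gamma(\frac{n+1}{2}(d-1)+\frac12)$, giving \eqref{cf}; the base case $n=0$ (one displacement) is simply the single-sphere characteristic function. Equivalently, one could expand each Bessel factor as a power series in $\tau_j^2$, integrate term by term via the Dirichlet--Liouville formula for integrals over the simplex, and resum the resulting multi-index series into a single Bessel series — this re-summation is the same identity \eqref{sem} in disguise.

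Formula \eqref{cf2} then follows by repeating the argument verbatim with the Dirichlet density \eqref{eq:jointdis3} in place of \eqref{eq:jointdis2}: the single-displacement step is unchanged (it depends only on the orientations), while the inductive step uses the companion identity \eqref{sem2}, adapted to the exponents $\frac d2-2$ of \eqref{eq:jointdis3}, each application raising the Bessel order by $\frac d2-1$ and producing the order $(n+1)(\frac d2-1)$ and the constant $2^{(n+1)(\frac d2-1)}\Gamma((n+1)(\frac d2-1)+1)$. I expect the main obstacle to be the bookkeeping: verifying at every stage that the power of $\tau$ carried by the Dirichlet weight is exactly the one for which \eqref{sem} (resp.\ \eqref{sem2}) applies — this is precisely the ``harmonization'' of the order of the Dirichlet law with the space dimension $d$ stressed in the Introduction — and that the cascade of $\Gamma$-functions collapses to the closed forms above rather than to the entangled expressions that appear when the orders are mismatched.
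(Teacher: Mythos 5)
Your proposal is correct and follows essentially the same route as the paper: reduce the angular average of each displacement to $2^{\frac d2-1}\Gamma(\frac d2)(c\tau_j\rho)^{-(\frac d2-1)}J_{\frac d2-1}(c\tau_j\rho)$, then evaluate the Dirichlet simplex integral by iterating the Bessel identities \eqref{sem} and \eqref{sem2}. The only (cosmetic) differences are that you obtain the single-displacement factor in one stroke via rotational invariance and Poisson's integral, where the paper grinds through the $(d-2)(n+1)$ angular integrations with Gradshteyn--Ryzhik 6.688(2), and you phrase the recursive simplex integration as an induction using the Dirichlet aggregation property rather than as explicit successive integrations.
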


\begin{proof} We show that under the assumption that \eqref{eq:jointdis2} represents the joint distribution of the intervals $\tau_1,...,\tau_n$, the characteristic function of the position of the $d$-dimensional random motion $\underline{\bf X}_d(t)$ is equal to \eqref{cf}. We can write that
\begin{equation*}
E\left\{e^{i<\underline{\alpha}_d,\underline{\bf X}_d(t)>}\right\}=\int_0^{t}d\tau_1\int_0^{t-\tau_1}d\tau_2\cdots\int_0^{t-\sum_{j=1}^{n-1}\tau_j}d\tau_n\,f_1(\tau_1,...,\tau_n)\,\mathcal{I}_n(\underline{\alpha}_d)
\end{equation*}
where
\begin{align}\label{eq:I_n}
&\mathcal{I}_n(\underline{\alpha}_d)\\
&=\int_0^\pi d \theta_{1,1}\cdots\int_0^\pi d \theta_{1,n+1}\cdots \int_0^\pi d \theta_{d-2,1}\cdots\int_0^\pi d \theta_{d-2,n+1} \int_0^{2\pi}d \phi_{1}\cdots\int_0^{2\pi} d \phi_{n+1}\notag\\
&\quad\times \prod_{j=1}^{n+1}\Bigg\{\exp\{ic\tau_j(\alpha_d\sin\theta_{1,j}\sin\theta_{2,j}\cdot\cdot\cdot\sin\theta_{d-2,j}\sin\phi_{j}+\alpha_{d-1}\sin\theta_{1,j}\sin\theta_{2,j}\cdot\cdot\cdot\sin\theta_{d-2,j}\cos\phi_{j}\notag\\
&\quad+\cdots+\alpha_2\sin\theta_{1,j}\cos\theta_{2,j}+\alpha_1 \cos\theta_{1,j} ) \}\frac{\Gamma(d/2)}{2\pi^{d/2}}\sin\theta_{1,j}^{d-2}\cdot\cdot\cdot\sin\theta_{d-2,j}\Bigg\}\notag
\end{align}

The multiple integral \eqref{eq:I_n} is performed with respect to $(n+1)(d-2)$ angle variables $\theta_{i,j}$ and $n+1$ variables $\phi_j$, with $1\leq i\leq d-2,\,1\leq j\leq n+1$, appearing in the orientation distribution \eqref{eq:jointdis1}. The integral $\mathcal{I}_n(\underline{\alpha}_d)$ has been worked out by Orsingher and De Gregorio (2007) as follows. Since
$$
\frac{1}{2\pi}\int_0^{2\pi}e^{ix(a\cos\phi+b\sin\phi)}d\phi=J_0(x\sqrt{a^2+b^2}),
$$
we observe that, after integrations with respect to $\phi_j,j=1,...,n+1,$ \eqref{eq:I_n} becomes
\begin{align*}
&\mathcal{I}_n(\underline{\alpha}_d)\\
&=\int_0^\pi d \theta_{1,1}\cdots\int_0^\pi d \theta_{1,n+1}\cdots \int_0^\pi d \theta_{d-2,1}\cdots\int_0^\pi d \theta_{d-2,n+1} \\
&\quad\times \prod_{j=1}^{n+1}\Bigg\{\exp\{ic\tau_j(\alpha_{d-2}\sin\theta_{1,j}\sin\theta_{2,j}\cdot\cdot\cdot\sin\theta_{d-2,j}\cos\theta_{d-2,j} +\dots+\alpha_2\sin\theta_{1,j}\cos\theta_{2,j}+\alpha_1 \cos\theta_{1,j} ) \}\\
&\quad\times J_0\left(c\tau_j\sin\theta_{1,j}\cdot\cdot\cdot\sin\theta_{d-2,j}
\sqrt{\alpha_{d}^2+\alpha_{d-1}^2}\right)\frac{\Gamma(d/2)}{\pi^{d/2-1}}\sin\theta_{1,j}^{d-2}\cdot\cdot\cdot\sin\theta_{d-2,j}\Bigg\}
\end{align*}

We are able to perform all the $(d-2)(n+1)$ integrations with
respect to the angles $\theta_{i,j},1\leq i \leq d-2,j=1,...,n+1,$
by applying successively the formulas below
\begin{equation}\label{nine}
\int_0^{\pi/2}(\sin x)^{\nu+1}\cos(b\cos x)J_\nu(a\sin
x)dx=\sqrt{\frac{\pi}{2}}\frac{a^\nu J_{\nu+\frac{1}{2}}\left(\sqrt{a^2+b^2}\right)}{(a^2+b^2)^{\frac{\nu}{2}+\frac{1}{4}}}
,
\end{equation}
for $Re~\nu>-1$ (see Gradshteyn-Ryzhik, 1980, pag. 743, formula
6.688.(2)). The integration with respect to
$\theta_{d-2,j},~j=1,...,n+1$ yields
\begin{equation*}\label{eleven}
\begin{split}
&\int_0^\pi d\theta_{d-2,1}\cdot\cdot\cdot \int_0^\pi
d\theta_{d-2,n+1}\prod_{j=1}^{n+1}e^{ic \tau_j\alpha_{d-2}
\sin\theta_{1,j}\cdot\cdot\cdot\sin\theta_{d-3,j}\cos\theta_{d-2,j}}\sin\theta_{d-2,j}\\
&J_0\left(c\tau_j\sin\theta_{1,j}\cdot\cdot\cdot\sin\theta_{d-2,j}
\sqrt{\alpha_{d}^2+\alpha_{d-1}^2}\right)
\\
&=\prod_{j=1}^{n+1}\Bigg\{\int_0^\pi e^{ic
\tau_j\alpha_{d-2}
\sin\theta_{1,j}\cdot\cdot\cdot\sin\theta_{d-3,j}\cos\theta_{d-2,j}}\sin\theta_{d-2,j}J_0\left(c\tau_j\sin\theta_{1,j}\cdot\cdot\cdot\sin\theta_{d-2,j}
\sqrt{\alpha_{d}^2+\alpha_{d-1}^2}\right)d\theta_{d-2,j}\Bigg\}\\
&=\prod_{j=1}^{n+1}\Bigg\{2\int_0^{\pi/2}\cos (c
\tau_j\alpha_{d-2}
\sin\theta_{1,j}\cdot\cdot\cdot\sin\theta_{d-3,j}\cos\theta_{d-2,j})\\
&\quad\sin\theta_{d-2,j}J_0\left(c\tau_j\sin\theta_{1,j}\cdot\cdot\cdot\sin\theta_{d-2,j}
\sqrt{\alpha_{d}^2+\alpha_{d-1}^2}\right)d\theta_{d-2,j}\Bigg\}\\
&=\prod_{j=1}^{n+1}\left\{2\sqrt{\frac{\pi}{2}}\frac{J_{1/2}\left(c\tau_j\sin\theta_{1,j}\cdot\cdot\cdot\sin\theta_{d-3,j}\sqrt{\alpha_{d}^2+\alpha_{d-1}^2
+\alpha_{d-2}^2}\right)}{\left(c\tau_j\sin\theta_{1,j}\cdot\cdot\cdot\sin\theta_{d-3,j}\sqrt{\alpha_{d}^2+\alpha_{d-1}^2
+\alpha_{d-2}^2}\right)^{1/2}}\right\}.
\end{split}
\end{equation*}

In the last step we applied formula \eqref{nine} for
\[
\nu=0,\, a=c\tau_j\sin\theta_{1,j}\cdot\cdot\sin\theta_{d-3,j}\sqrt{\alpha_{d}^2+\alpha_{d-1}^2
},\, b=c\tau_j\alpha_{d-2}\sin\theta_{1,j}\cdot\cdot\sin\theta_{d-3,j}
\]
and also considered that
\[
\int_0^\pi \sin(\beta\cos x )(\sin x)^{\nu+1}J_\nu(\alpha\sin x)dx=0.
\]

 The integration with respect to
the variables $\theta_{d-3,1},...,\theta_{d-3,n+1}$ follows
similarly by applying again \eqref{nine} and yields
\begin{equation*}\label{twelve}
\begin{split}
&\int_0^\pi d\theta_{d-3,1}\cdot\cdot\cdot \int_0^\pi
d\theta_{d-3,n+1}\prod_{j=1}^{n+1}e^{ic \tau_j\alpha_{d-3}
\sin\theta_{1,j}\cdot\cdot\sin\theta_{d-4,j}\cos\theta_{d-3,j}}\sin\theta^2_{d-3,j}\\
&\times2\sqrt{\frac{\pi}{2}}\frac{J_{1/2}\left(c\tau_j\sin\theta_{1,j}\cdot\cdot\sin\theta_{d-3,j}\sqrt{\alpha_{d}^2+\alpha_{d-1}^2
+\alpha_{d-2}^2}\right)}{\left(c\tau_j\sin\theta_{1,j}\cdot\cdot\sin\theta_{d-3,j}\sqrt{\alpha_{d}^2+\alpha_{d-1}^2
+\alpha_{d-2}^2}\right)^{1/2}}\\
&=\prod_{j=1}^{n+1}\left(2\sqrt{\frac{\pi}{2}}\right)^2\frac{J_1\left(c\tau_j\sin\theta_{1,j}\cdot\cdot\sin\theta_{d-4,j}\sqrt{\alpha_{d}^2+\alpha_{d-1}^2
+\alpha_{d-2}^2+\alpha_{d-3}^2}\right)}{\left(c\tau_j\sin\theta_{1,j}\cdot\cdot\sin\theta_{d-4,j}\sqrt{\alpha_{d}^2+\alpha_{d-1}^2
+\alpha_{d-2}^2+\alpha_{d-3}^2}\right)}.
\end{split}
\end{equation*}

By continuing in the same way, that is by applying successively
formula \eqref{nine} we obtain that
\begin{equation}\label{intangle}
\mathcal{I}_n(\underline{\alpha}_d)=\left\{2^{\frac d2-1}\Gamma\left(\frac d2\right)\right\}^{n+1}\prod_{j=1}^{n+1}\frac{J_{\frac d2-1}(c\tau_j||\underline{\alpha}_d||)}{(c\tau_j||\underline{\alpha}_d||)^{\frac d2-1}}
\end{equation}

Therefore, the characteristic function becomes
\begin{align}\label{eq:cf}
E\left\{e^{i<\underline{\alpha}_d,\underline{\bf X}_d(t)>}\right\}
&=\left\{2^{\frac d2-1}\Gamma\left(\frac d2\right)\right\}^{n+1}\int_0^{t}d\tau_1\int_0^{t-\tau_1}d\tau_2\cdots\int_0^{t-\sum_{j=1}^{n-1}\tau_j}d\tau_n\,f_1(\tau_1,...,\tau_n)\notag\\
&\quad\times\prod_{j=1}^{n+1}\frac{J_{\frac d2-1}(c\tau_j||\underline{\alpha}_d||)}{(c\tau_j||\underline{\alpha}_d||)^{\frac d2-1}}\notag\\
&=\left\{2^{\frac d2-1}\Gamma\left(\frac d2\right)\right\}^{n+1}\frac{\Gamma((n+1)(d-1))}{(\Gamma(d-1))^{n+1}}\frac{1}{t^{(n+1)(d-1)-1}}\notag\\
&\quad\times\int_0^{t}\tau_1^{d-2}d\tau_1\int_0^{t-\tau_1}\tau_2^{d-2}d\tau_2\cdots\int_0^{t-\sum_{j=1}^{n-1}\tau_j}\tau_{n}^{d-2}(t-\sum_{j=1}^{n}\tau_j)^{d-2}d\tau_n\notag\\
&\quad\times\prod_{j=1}^{n+1}\frac{J_{\frac d2-1}(c\tau_j||\underline{\alpha}_d||)}{(c\tau_j||\underline{\alpha}_d||)^{\frac d2-1}}
\end{align}
In order to work out this $n$-fold integral, the following result (see Gradshteyn-Ryzhik, 1980, pag. 743, formula
6.581(3)) 
\begin{equation}\label{sem}
\int_0^ax^\mu(a-x)^\nu J_\mu(x)J_\nu(a-x)dx=\frac{\Gamma(\mu+\frac12)\Gamma(\nu+\frac12)}{\sqrt{2\pi}\Gamma(\mu+\nu+1)}a^{\mu+\nu+\frac12}J_{\mu+\nu+\frac12}(a),
\end{equation}
with $Re\,\mu>-\frac12$ and $Re\,\nu>-\frac12$, assumes a crucial role. Indeed, we apply recursively the formula \eqref{sem} to calculate each integral with respect to the variable $\tau_j$. In the first step we have therefore
\begin{align*}
&\int_0^{t-\sum_{j=1}^{n-1}\tau_j}\tau_{n}^{d-2}(t-\sum_{j=1}^{n}\tau_j)^{d-2}\frac{J_{\frac d2-1}(c\tau_n||\underline{\alpha}_d||)}{(c\tau_n||\underline{\alpha}_d||)^{\frac d2-1}}\frac{J_{\frac d2-1}(c(t-\sum_{j=1}^{n}\tau_j)||\underline{\alpha}_d||)}{(c(t-\sum_{j=1}^{n}\tau_j)||\underline{\alpha}_d||)^{\frac d2-1}}d\tau_n\\
&=\int_0^{t-\sum_{j=1}^{n-1}\tau_j}\frac{d \tau_n}{(c||\underline{\alpha}_d||)^{2d-4}}(c\tau_n||\underline{\alpha}_d||)^{\frac d2-1}(c(t-\sum_{j=1}^{n}\tau_j)||\underline{\alpha}_d||)^{\frac d2-1}\\
&\quad \times J_{\frac d2-1}(c\tau_n||\underline{\alpha}_d||)J_{\frac d2-1}(c(t-\sum_{j=1}^{n}\tau_j)||\underline{\alpha}_d||)=(c\tau_n||\underline{\alpha}_d||=y )
\end{align*}
\begin{align*}
&=\frac{1}{(c||\underline{\alpha}_d||)^{2d-3}}\int_0^{c(t-\sum_{j=1}^{n-1}\tau_j)||\underline{\alpha}_d||}dyy^{\frac d2-1}(c(t-\sum_{j=1}^{n-1}\tau_j)||\underline{\alpha}_d||-y)^{\frac d2-1}\\
&\quad\times J_{\frac d2-1}(y)J_{\frac d2-1}(c(t-\sum_{j=1}^{n-1}\tau_j)||\underline{\alpha}_d||-y)\\
&=\frac{1}{(c||\underline{\alpha}_d||)^{2d-3}}\frac{\left(\Gamma\left(\frac{d-1}{2}\right)\right)^2}{\sqrt{2\pi}\Gamma(d-1)}(c(t-\sum_{j=1}^{n-1}\tau_j)||\underline{\alpha}_d||)^{d-\frac32}J_{d-\frac 32}(c(t-\sum_{j=1}^{n-1}\tau_j)||\underline{\alpha}_d||)
\end{align*}

The second integral is given by
\begin{eqnarray*}
&&\frac{\left(\Gamma\left(\frac{d-1}{2}\right)\right)^2}{\sqrt{2\pi}\Gamma(d-1)}\frac{1}{(c||\underline{\alpha}_d||)^{3d-5}}
\int_0^{t-\sum_{j=1}^{n-2}\tau_j}d\tau_{n-1}\\
&&\times(c\tau_{n-1}||\underline{\alpha}_d||)^{\frac d2-1}(c(t-\sum_{j=1}^{n-1}\tau_j)||\underline{\alpha}_d||)^{d-\frac32}J_{\frac d2-1}(c\tau_{n-1}||\underline{\alpha}_d||)J_{d-\frac 32}(c(t-\sum_{j=1}^{n-1}\tau_j)||\underline{\alpha}_d||)=(c\tau_n||\underline{\alpha}_d||=y )\\
&&=\frac{\left(\Gamma\left(\frac{d-1}{2}\right)\right)^2}{\sqrt{2\pi}\Gamma(d-1)}\frac{1}{(c||\underline{\alpha}_d||)^{3d-4}}
\int_0^{c(t-\sum_{j=1}^{n-2}\tau_j)||\underline{\alpha}_d||}dy\\
&&\quad\times y^{\frac d2-1}(c(t-\sum_{j=1}^{n-2}\tau_j)||\underline{\alpha}_d||-y)^{d-\frac32}J_{\frac d2-1}(y)J_{d-\frac 32}(c(t-\sum_{j=1}^{n-2}\tau_j)||\underline{\alpha}_d||-y)\\
&&=\frac{\left(\Gamma\left(\frac{d-1}{2}\right)\right)^3}{(\sqrt{2\pi})^2\Gamma(\frac32(d-1))}\frac{1}{(c||\underline{\alpha}_d||)^{3d-4}}(c(t-\sum_{j=1}^{n-2}\tau_j)||\underline{\alpha}_d||)^{\frac32d-2}J_{\frac32d-2}(c(t-\sum_{j=1}^{n-2}\tau_j)||\underline{\alpha}_d||)
\end{eqnarray*}
By considering formula \eqref{sem}, we see that after $(n-1)$ integrations (with $n\geq 2$), the exponent of $c(t-\tau_1)||\underline{\alpha}_d||$ as well as the order of the Bessel function is given by the formula
$$\left(\frac d2-1\right)+\left[\left(\frac d2-1\right)(n-1)+\frac12(n-2)\right]+\frac12=\frac n2(d-1)-\frac12$$
with $\mu=\frac d2-1$, $\nu=\left(\frac d2-1\right)(n-1)+\frac12(n-2)$. The exponent of $c||\underline{\alpha}_d||$ can be obtained by adding to $dn-(n+1)$ (where $n$ here is the number of integrations) the number $2(\frac d2-1)$ due to the adjustements necessary to apply \eqref{sem}.

Then, the last integral becomes
\begin{align}\label{eq:lastint}
&\frac{\left(\Gamma\left(\frac{d-1}{2}\right)\right)^n}{(\sqrt{2\pi})^{n-1}\Gamma(\frac n2(d-1))}\frac{1}{(c||\underline{\alpha}_d||)^{(d-1)(n+1)-2}}
\int_0^{t}d\tau_{1}\\
&\times(c\tau_{1}||\underline{\alpha}_d||)^{\frac d2-1}(c(t-\tau_1)||\underline{\alpha}_d||)^{\frac n2(d-1)-\frac12}J_{\frac d2-1}(c\tau_{1}||\underline{\alpha}_d||)J_{\frac n2(d-1)-\frac12}(c(t-\tau_1)||\underline{\alpha}_d||)=(c\tau_1||\underline{\alpha}_d||=y )\notag\\
&=\frac{\left(\Gamma\left(\frac{d-1}{2}\right)\right)^n}{(\sqrt{2\pi})^{n-1}\Gamma(\frac n2(d-1))}\frac{1}{(c||\underline{\alpha}_d||)^{(d-1)(n+1)-1}}
\int_0^{ct||\underline{\alpha}_d||}dy\notag\\
&\quad\times y^{\frac d2-1}(ct||\underline{\alpha}_d||-y)^{\frac n2(d-1)-\frac12}J_{\frac d2-1}(y)J_{\frac n2(d-1)-\frac12}(ct||\underline{\alpha}_d||-y)\notag\\
&=\frac{\left(\Gamma\left(\frac{d-1}{2}\right)\right)^{n+1}}{(\sqrt{2\pi})^n\Gamma\left(\frac{(d-1)}{2}(n+1)\right)}\frac{(ct||\underline{\alpha}_d||)^{\frac{n+1}2(d-1)-\frac12}}{(c||\underline{\alpha}_d||)^{(d-1)(n+1)-1}}J_{\frac{n+1}2(d-1)-\frac12}(ct||\underline{\alpha}_d||)\notag
\end{align}
Therefore, plugging the result \eqref{eq:lastint} into the expression \eqref{eq:cf}, and by observing that by means of the duplication formula we have that
$$\Gamma\left(\frac d2\right)=\sqrt{\pi}2^{2-d}\frac{\Gamma(d-1)}{\Gamma(\frac{d-1}{2})}$$
and 
$$\Gamma\left(\frac{(n+1)}{2}(d-1)+\frac12\right)=\sqrt{\pi}2^{1-(n+1)(d-1)}\frac{\Gamma((n+1)(d-1))}{\Gamma(\frac{(n+1)}{2}(d-1))},$$
some simplifications lead to result \eqref{cf}.

Under the assumption that the $f_2(\tau_1,...,\tau_n)$ is the density law for the intertimes $\tau_j,j=1,...,n+1,$ and by using arguments similar to those of the first part of the proof, the characteristic function of $\underline{\bf Y}_d(t),t>0,$ assumes the following integral form
\begin{align*}\label{eq:cf2}
E\left\{e^{i<\underline{\alpha}_d,\underline{\bf Y}_d(t)>}\right\}
&=\left\{2^{\frac d2-1}\Gamma\left(\frac d2\right)\right\}^{n+1}\frac{\Gamma((n+1)(\frac d2-1))}{(\Gamma(\frac d2-1))^{n+1}}\frac{1}{t^{(n+1)(\frac d2-1)-1}}\notag\\
&\quad\times\int_0^{t}\tau_1^{\frac d2-2}d\tau_1\int_0^{t-\tau_1}\tau_2^{\frac d2-2}d\tau_2\cdots\int_0^{t-\sum_{j=1}^{n-1}\tau_j}\tau_{n}^{\frac d2-2}(t-\sum_{j=1}^{n}\tau_j)^{\frac d2-2}d\tau_n\prod_{j=1}^{n+1}\frac{J_{\frac d2-1}(c\tau_j||\underline{\alpha}_d||)}{(c\tau_j||\underline{\alpha}_d||)^{\frac d2-1}}\notag
\end{align*}
The first integral with respect to $\tau_n$ becomes
\begin{align*}
&\frac{1}{(c||\underline{\alpha}_d||)^{d-4}}\int_0^{t-\sum_{j=1}^{n-1}\tau_j}\frac{J_{\frac d2-1}(c\tau_n||\underline{\alpha}_d||)}{c\tau_n||\underline{\alpha}_d||}\frac{J_{\frac d2-1}(c(t-\sum_{j=1}^n\tau_j)||\underline{\alpha}_d||)}{c(t-\sum_{j=1}^n\tau_j)||\underline{\alpha}_d||}d\tau_n=(y=c\tau_n||\underline{\alpha}_d||)\\
&=\frac{1}{(c||\underline{\alpha}_d||)^{d-3}}\int_0^{c(t-\sum_{j=1}^{n-1}\tau_j)||\underline{\alpha}_d||}\frac{J_{\frac d2-1}(y)}{y}\frac{J_{\frac d2-1}(c(t-\sum_{j=1}^{n-1}\tau_j)||\underline{\alpha}_d||-y)}{c(t-\sum_{j=1}^{n-1}\tau_j)||\underline{\alpha}_d||-y}dy\\
&=\frac{1}{(c||\underline{\alpha}_d||)^{d-3}}\frac{2}{\frac d2-1}\frac{J_{2\left(\frac d2-1\right)}(c(t-\sum_{j=1}^{n-1}\tau_j)||\underline{\alpha}_d||)}{c(t-\sum_{j=1}^{n-1}\tau_j)||\underline{\alpha}_d||}
\end{align*}
where in the last step we have used the following formula (see Gradshteyn-Ryzhik, 1980, pag. 678, formula
6.533.(2))
\begin{equation}\label{sem2}
\int_0^a\frac{J_\mu(x)J_\nu(a-x)}{x(a-x)}dx=\left(\frac1\mu+\frac1\nu\right)\frac{J_{\mu+\nu}(a)}{a},\quad Re\, \mu>0,\, Re\,\nu>0.
\end{equation}
The second integral provides us
\begin{align*}
&\frac{1}{(c||\underline{\alpha}_d||)^{\frac32 d-5}}\frac{2}{\frac d2-1}\int_0^{t-\sum_{j=1}^{n-2}\tau_j}\frac{J_{\frac d2-1}(c\tau_{n-1}||\underline{\alpha}_d||)}{c\tau_{n-1}||\underline{\alpha}_d||}\frac{J_{2\left(\frac d2-1\right)}(c(t-\sum_{j=1}^{n-1}\tau_j)||\underline{\alpha}_d||)}{c(t-\sum_{j=1}^{n-1}\tau_j)||\underline{\alpha}_d||}d\tau_{n-1}=(y=c\tau_n||\underline{\alpha}_d||)\\
&=\frac{1}{(c||\underline{\alpha}_d||)^{\frac32 d-4}}\frac{2}{\frac d2-1}\int_0^{c(t-\sum_{j=1}^{n-2}\tau_j)||\underline{\alpha}_d||}\frac{J_{\frac d2-1}(y)}{y}\frac{J_{2\left(\frac d2-1\right)}(c(t-\sum_{j=1}^{n-2}\tau_j)||\underline{\alpha}_d||-y)}{c(t-\sum_{j=1}^{n-2}\tau_j)||\underline{\alpha}_d||-y}dy\\
&=\frac{1}{(c||\underline{\alpha}_d||)^{\frac32 d-4}}\frac{3}{(\frac d2-1)^2}\frac{J_{3\left(\frac d2-1\right)}(c(t-\sum_{j=1}^{n-2}\tau_j)||\underline{\alpha}_d||)}{c(t-\sum_{j=1}^{n-2}\tau_j)||\underline{\alpha}_d||}
\end{align*}

In the last integral, the exponent of $c||\underline{\alpha}_d||$ is equal to $(n-1+2)(\frac d2-1)-2$ ($n-1$, with $n\geq 2$, is the number of integrations performed). Therefore

\begin{align*}
&\frac{1}{(c||\underline{\alpha}_d||)^{(n+1)(\frac d2 -1)-2}}\frac{n}{(\frac d2-1)^{n-1}}\int_0^{t}\frac{J_{\frac d2-1}(c\tau_{1}||\underline{\alpha}_d||)}{c\tau_1||\underline{\alpha}_d||}\frac{J_{n\left(\frac d2-1\right)}(c(t-\tau_1)||\underline{\alpha}_d||)}{c(t-\tau_1)||\underline{\alpha}_d||}d\tau_{1}=(y=c\tau_1||\underline{\alpha}_d||)\\
&=\frac{1}{(c||\underline{\alpha}_d||)^{(n+1)(\frac d2 -1)-1}}\frac{n}{(\frac d2-1)^{n-1}}\int_0^{ct||\underline{\alpha}_d||}\frac{J_{\frac d2-1}(y)}{y}\frac{J_{n\left(\frac d2-1\right)}(ct||\underline{\alpha}_d||-y)}{ct||\underline{\alpha}_d||-y}dy\\
&=\frac{1}{(c||\underline{\alpha}_d||)^{(n+1)(\frac d2 -1)-1}}\frac{n+1}{(\frac d2-1)^{n}}\frac{J_{(n+1)\left(\frac d2-1\right)}(ct||\underline{\alpha}_d||)}{ct||\underline{\alpha}_d||}
\end{align*}
and then the result \eqref{cf2} follows immediately.

\end{proof}

In the next Theorem we are able to invert the characteristic functions \eqref{cf} and \eqref{cf2}.
\begin{theorem}\label{th2}
The probability laws of $\underline{\bf X}_d(t),t>0,$ and $\underline{\bf Y}_d(t),t>0,$ are respectively equal to
\begin{equation}\label{condlaw}
p_{\underline{\bf X}_d}(\underline{\bf x}_d,t;n)=\frac{\Gamma(\frac{n+1}{2}(d-1)+\frac12)}{\Gamma(\frac{n}{2}(d-1))}\frac{(c^2t^2- ||\underline{\bf x}_d||^2)^{\frac{n}{2}(d-1)-1}}{\pi^{d/2}(ct)^{(n+1)(d-1)-1}},
\end{equation}
$d\geq 2$, and
\begin{equation}\label{condlaw2}
p_{\underline{\bf Y}_d}(\underline{\bf y}_d,t;n)=\frac{\Gamma((n+1)(\frac d2-1)+1)}{\Gamma(n(\frac d2-1))}\frac{(c^2t^2- ||\underline{\bf y}_d||^2)^{n(\frac d2-1)-1}}{\pi^{d/2}(ct)^{2(n+1)(\frac d2-1)}},
\end{equation}
$d\geq 3$, with $||\underline{\bf x}_d||<ct$, $||\underline{\bf y}_d||<ct$ and $n\geq 1$.
\end{theorem}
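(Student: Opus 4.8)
The plan is to exploit the rotational invariance of \eqref{cf} and \eqref{cf2}: both characteristic functions depend on $\underline{\alpha}_d$ only through $\|\underline{\alpha}_d\|$, so the laws of $\underline{\bf X}_d(t)$ and $\underline{\bf Y}_d(t)$ are radial and absolutely continuous. Rather than inverting \eqref{cf}--\eqref{cf2} by a direct Hankel/contour computation --- delicate, since for small $n$ these functions are not integrable on $\mathbb{R}^d$ --- I would take \eqref{condlaw} and \eqref{condlaw2} as \emph{candidate} densities, compute their Fourier transforms, check that they equal \eqref{cf} and \eqref{cf2} respectively, and conclude by uniqueness of the characteristic function. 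Observe first that for every $n\geq1$ and $d\geq2$ (resp.\ $d\geq3$) the exponents $\frac n2(d-1)-1$ and $n(\frac d2-1)-1$ exceed $-1$, so \eqref{condlaw} and \eqref{condlaw2} are genuine $L^1$ functions (integrable boundary singularity) with strictly positive constants.

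The first step is to reduce the $d$-dimensional Fourier transform of a radial function $g(\underline{\bf x}_d)=g_0(\|\underline{\bf x}_d\|)$ to a one-dimensional integral,
\[
\int_{\mathbb{R}^d}e^{i<\underline{\alpha}_d,\underline{\bf x}_d>}g_0(\|\underline{\bf x}_d\|)\,d\underline{\bf x}_d=(2\pi)^{d/2}\|\underline{\alpha}_d\|^{-(d/2-1)}\int_0^\infty r^{d/2}\,g_0(r)\,J_{d/2-1}(r\|\underline{\alpha}_d\|)\,dr,
\]
which follows from the classical value $\int_{\mathcal H_1^d}e^{ir<\underline{\alpha}_d,\underline{\omega}>}\,d\underline{\omega}=(2\pi)^{d/2}(r\|\underline{\alpha}_d\|)^{-(d/2-1)}J_{d/2-1}(r\|\underline{\alpha}_d\|)$ of the average of a plane wave over the unit hypersphere. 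Taking $g_0(r)=(c^2t^2-r^2)^b$ on $(0,ct)$ and $0$ outside, the remaining integral is \emph{Sonine's first finite integral}
\[
\int_0^{R}r^{\mu+1}(R^2-r^2)^{\nu}J_\mu(\rho r)\,dr=\frac{2^\nu\,\Gamma(\nu+1)\,R^{\mu+\nu+1}}{\rho^{\nu+1}}\,J_{\mu+\nu+1}(R\rho),\qquad \mathrm{Re}\,\nu>-1,\ \mathrm{Re}\,\mu>-1,
\]
(Watson, 1922, \S12.11; Gradshteyn--Ryzhik, 1980, 6.567(1)) applied with $\mu=\frac d2-1$, $\nu=b$, $R=ct$. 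This is the only analytic input, and it plays here the role that the Bessel semigroup identities \eqref{sem}--\eqref{sem2} played in Theorem~\ref{th1}.

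What remains is bookkeeping. For \eqref{condlaw} set $b=\frac n2(d-1)-1$: Sonine's formula then produces $J_{d/2+b}$ with $\frac d2+b=\frac{n+1}{2}(d-1)-\frac12$, exactly the order in \eqref{cf}; the power of $ct$ fits because $d+2b=(n+1)(d-1)-1$; and the constant fits because $\Gamma(b+1)=\Gamma(\frac n2(d-1))$ cancels the denominator of \eqref{condlaw}, the $\pi^{d/2}$ from $(2\pi)^{d/2}$ cancels the $\pi^{d/2}$ in \eqref{condlaw}, and one is left with $2^{\,d/2+b}\,\Gamma(\frac{n+1}{2}(d-1)+\frac12)=2^{\nu}\Gamma(\nu+1)$, the prefactor in \eqref{cf}. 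Hence the Fourier transform of \eqref{condlaw} is \eqref{cf}. The second case is entirely parallel: replace $d-1$ by $\frac d2-1$ throughout and take $b=n(\frac d2-1)-1$, so that $\frac d2+b=(n+1)(\frac d2-1)$ is the order in \eqref{cf2} and $d+2b=2(n+1)(\frac d2-1)$ gives the correct power of $ct$. Uniqueness of the characteristic function then yields \eqref{condlaw} and \eqref{condlaw2}; one should also record, as a consistency check, that $b>-1$ makes these bona fide probability densities (total mass $1$ being automatic once the transform equals $1$ at $\underline{\alpha}_d=\underline 0$) and that for $d=2,4$ they reduce to the planar and four-dimensional laws quoted in the Introduction.

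In my view the main obstacle is not the algebra but the \emph{justification of the inversion}: since \eqref{cf} and \eqref{cf2} are not in $L^1(\mathbb{R}^d)$ when $n$ is small, a direct evaluation of the inversion integral would have to be carried out in an Abel/Ces\`aro sense, or via a discontinuous Weber--Schafheitlin integral whose ${}_2F_1$ collapses to the binomial $(1-z)^{b}$ precisely because one of its numerator parameters reduces to $0$ (with value $0$ for $\|\underline x_d\|>ct$, which recovers the support). The ``guess-and-verify'' route sketched above circumvents this entirely, since the candidate densities \eqref{condlaw}, \eqref{condlaw2} are integrable and their transforms are obtained by an honest, Fubini-justified application of Sonine's integral.
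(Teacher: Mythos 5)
Your proposal is correct, and every piece of the bookkeeping checks out: with $\mu=\frac d2-1$, $\nu=b=\frac n2(d-1)-1$ (resp.\ $b=n(\frac d2-1)-1$) Sonine's finite integral does reproduce the order $\frac{n+1}{2}(d-1)-\frac12$ (resp.\ $(n+1)(\frac d2-1)$), the power $d+2b=(n+1)(d-1)-1$ (resp.\ $2(n+1)(\frac d2-1)$) of $ct$, and the constants $2^{\frac d2+b}\Gamma(\frac d2+b+1)$ of \eqref{cf} and \eqref{cf2}, and $b>-1$ holds for all $n\geq1$ in the stated ranges of $d$. However, your route is genuinely different from the paper's proof of Theorem \ref{th2}. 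The paper inverts \eqref{cf} directly: it writes the inversion integral, passes to hyperspherical coordinates, uses \eqref{intangle2} to reduce to a one-dimensional Hankel-type integral, and evaluates $\int_0^\infty \rho^{\mu-\nu}J_{\nu+1}(ct\rho)J_\mu(\rho\|\underline{\bf x}_d\|)\,d\rho$ by the discontinuous Weber--Schafheitlin formula (Gradshteyn--Ryzhik 6.575(1), which the authors themselves note they use ``with a correction in the bounds of $\mu$ and $\nu$''); the vanishing of that integral for $\|\underline{\bf x}_d\|>ct$ is what produces the support, but the convergence there is only conditional, which is precisely the delicacy you flag. Your guess-and-verify argument sidesteps this: the candidate density is a compactly supported $L^1$ function, Sonine's integral is absolutely convergent and Fubini-justified, and uniqueness of characteristic functions finishes the proof. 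Interestingly, the paper's first Remark after Theorem \ref{th2} carries out essentially your verification (expanding $J_{\frac d2-1}$ in series and integrating term by term, which amounts to re-deriving Sonine's formula), but only as a consistency check; you have promoted that check, via the uniqueness theorem, into a complete and arguably cleaner proof, at the price of having to ``know'' the answer in advance, whereas the paper's direct inversion derives \eqref{condlaw} and \eqref{condlaw2} without guessing.
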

\begin{proof}
By inverting the characteristic function \eqref{cf}, we are able to show that the 
density law of the process $\underline{\bf X}_d(t),t>0,$ is given by \eqref{condlaw}. Therefore, by passing to the hyperspherical coordinates, we have that
\begin{align*}
p_{\underline{\bf X}_d}(\underline{\bf x}_d,t;n)
&=\frac{1}{(2\pi)^d}\int_{\mathbb{R}^d}e^{-i<\underline{\alpha}_d,\underline{\bf x}_d>}E\left\{e^{i<\underline{\alpha}_d,\underline{\bf X}_d(t)>}\right\}d\alpha_1\cdots d\alpha_d\notag\\
&=\frac{1}{(2\pi)^d}\int_0^\infty \rho^{d-1}d\rho\int_0^\pi d\theta_1\cdots\int_0^\pi d\theta_{d-2}\int_0^{2\pi} d	\phi \sin^{d-2}\theta_1\cdots\sin\theta_{d-2}\notag\\
&\times \exp\{-i\rho(x_d\sin\theta_1\cdots\sin\theta_{d-2}\sin\phi+\cdots+x_2\sin\theta_1\cos\theta_2+x_1\cos\theta_1)\notag\\
&
\times\notag\frac{2^{\frac{n+1}{2}(d-1)-\frac12}\Gamma(\frac{n+1}{2}(d-1)+\frac12)}{(ct\rho)^{\frac{n+1}{2}(d-1)-\frac12}}J_{\frac{n+1}{2}(d-1)-\frac12}(ct\rho)\notag\\
&=\frac{2^{\frac{n+1}{2}(d-1)-\frac12}}{(2\pi)^{d/2}}\Gamma\left(\frac{n+1}{2}(d-1)+\frac12\right)\int_0^\infty\rho^{d-1}\frac{J_{\frac d2-1}(\rho ||\underline{\bf x}_d||)}{(\rho||\underline{\bf x}_d||)^{\frac d2-1}}\frac{J_{\frac{n+1}{2}(d-1)-\frac12}(ct\rho)}{(ct\rho)^{\frac{n+1}{2}(d-1)-\frac12}}d	\rho\notag\\
&=\frac{2^{\frac{n+1}{2}(d-1)-\frac12}}{(2\pi)^{d/2}}\frac{\Gamma\left(\frac{n+1}{2}(d-1)+\frac12\right)}{(ct)^{\frac{n+1}{2}(d-1)-\frac12}||\underline{\bf x}_d||^{\frac d2-1}}\int_0^\infty \rho^{\frac {d+1}{2}-\frac{n+1}{2}(d-1)}J_{\frac d2-1}(\rho ||\underline{\bf x}_d||)J_{\frac{n+1}{2}(d-1)-\frac12}(ct\rho)d\rho\notag\\
&=\frac{1}{\pi^{d/2}(ct)^{(n+1)(d-1)-1}}\frac{\Gamma(\frac{n+1}{2}(d-1)+\frac12)}{\Gamma(\frac{n+1}{2}(d-1)-\frac d2+\frac12)}(c^2t^2- ||\underline{\bf x}_d||^2)^{\frac{n+1}{2}(d-1)-\frac {d+1}{2}}
\end{align*}
In the first step above we have performed calculations similar to those leading to \eqref{intangle} and then
\begin{align}\label{intangle2}
&\int_0^\pi d\theta_1\cdots\int_0^\pi d\theta_{d-2}\int_0^{2\pi} d\phi \sin^{d-2}\theta_1\cdots\sin\theta_{d-2}\\
&\exp{\{-i\rho(x_d\sin\theta_1\cdots\sin\theta_{d-2}\sin\phi+\cdots+x_2\sin\theta_1\cos\theta_2+x_1\cos\theta_1)\}}\notag\\
&=(2\pi)^{\frac d2}\frac{J_{\frac d2-1}(\rho ||\underline{\bf x}_d||)}{(\rho ||\underline{\bf x}_d||)^{\frac d2-1}}\notag,
\end{align}
 while in the last step  we have used the formula (see Gradshteyn-Ryzhik, 1980, pag. 692, formula
6.575.(1) with a correction in the bounds of $\mu$ and $\nu$)
$$\int_0^\infty J_{\nu+1}(\alpha x)J_\mu(\beta x)x^{\mu-\nu}dx=\frac{(\alpha^2-\beta^2)^{\nu-\mu}\beta^\mu}{2^{\nu-\mu}\alpha^{\nu+1}\Gamma(\nu-\mu+1)},$$
$ \alpha\geq\beta,\,Re\,(\nu+1)>Re(\mu)>0$,
for $\nu=\frac{n+1}{2}(d-1)-\frac32$, $\mu=\frac d2-1$, $\alpha=ct$ and $\beta= ||\underline{\bf x}_d||$.

Analogously, for the random flight ${\bf Y}_d(t),t>0$, we have that
\begin{align*}
p_{\underline{\bf Y}_d}(\underline{\bf y}_d,t;n)
 &=\frac{2^{(n+1)(\frac d2-1)}\Gamma((n+1)(\frac d2-1)+1)}{(2\pi)^{\frac d2}}\int_0^\infty\rho^{d-1}\frac{J_{\frac d2-1}(\rho ||\underline{\bf x}_d||)}{(\rho||\underline{\bf x}_d||)^{\frac d2-1}}\frac{J_{(n+1)(\frac d2-1)}(ct\rho)}{(ct\rho)^{(n+1)(\frac d2-1)}}d	\rho\notag\\
&=\frac{2^{(n+1)(\frac d2-1)}\Gamma((n+1)(\frac d2-1)+1)}{(2\pi)^{\frac d2}(ct)^{(n+1)(\frac d2-1)}||\underline{\bf x}_d||^{\frac d2-1}}\int_0^\infty \rho^{\frac {d}{2}-(n+1)(\frac d2-1)}J_{\frac d2-1}(\rho ||\underline{\bf x}_d||)J_{(n+1)(\frac d2-1)}(ct\rho)d\rho\notag\\
&=\frac{1}{\pi^{d/2}}\frac{\Gamma((n+1)(\frac d2-1)+1)}{(ct)^{2(n+1)(\frac d2-1)}\Gamma(n(\frac d2-1))}(c^2t^2- ||\underline{\bf x}_d||^2)^{n(\frac d2-1)-1}
\end{align*}
\end{proof}

\begin{remark}
We can also check results \eqref{condlaw} and \eqref{condlaw2} by evaluating their Fourier transforms, thus showing that we reobtain results \eqref{cf} and \eqref{cf2} respectively. For $\mathcal{H}_{ct}^d=\{\underline{\bf x}_d:||\underline{\bf x}_d||<ct\}$, we have that

\begin{align}\label{eq:proofcf}
&\int_{\mathcal{H}_{ct}^d}e^{i<\underline{\alpha}_d,\underline{\bf x}_d>}p_{\underline{\bf  X}_d}(\underline{\bf x}_d,t;n) dx_1\cdots dx_d\\
&=\frac{\Gamma(\frac{n+1}{2}(d-1)+\frac12)}{\pi^{d/2}\Gamma(\frac{n}{2}(d-1))(ct)^{(n+1)(d-1)-1}}\int_{\mathcal{H}_{ct}^d}e^{i<\underline{\alpha}_d,\underline{\bf x}_d(t)>}(c^2t^2- ||\underline{\bf x}_d||^2)^{\frac{n}{2}(d-1)-1}dx_1\cdots dx_d\notag\\
&=\frac{\Gamma(\frac{n+1}{2}(d-1)+\frac12)}{\pi^{d/2}(ct)^{(n+1)(d-1)-1}\Gamma(\frac{n}{2}(d-1))}\int_0^{ct} \rho^{d-1}d\rho\int_0^\pi d\theta_1\cdots\int_0^\pi d\theta_{d-2}\int_0^{2\pi} d	\phi\notag\\
&\quad\times \exp\{i\rho(\alpha_d\sin\theta_1\cdots\sin\theta_{d-2}\sin\phi+\cdots+\alpha_2\sin\theta_1\cos\theta_2+\alpha_1\cos\theta_1) \notag\\
&\quad\times (c^2t^2- \rho^2)^{\frac{n}{2}(d-1)-1} \sin^{d-2}\theta_1\cdots\sin\theta_{d-2}\notag\\
&=\frac{2^{d/2}\Gamma(\frac{n+1}{2}(d-1)+\frac12)}{(ct)^{(n+1)(d-1)-1}\Gamma(\frac{n}{2}(d-1))}\int_0^{ct} \rho^{d-1} (c^2t^2- \rho^2)^{\frac{n}{2}(d-1)-1}\frac{J_{\frac d2-1}(\rho||\underline{\alpha}_d||)}{(\rho||\underline{\alpha}_d||)^{\frac d2-1}}d\rho,\notag
\end{align}
where in the last step we have used the result \eqref{intangle2}.

Now, we work out the previous integral
\begin{align}\label{eq:proofcf2}
&\int_0^{ct} \rho^{d-1} (c^2t^2- \rho^2)^{\frac{n}{2}(d-1)-1}\frac{J_{\frac d2-1}(\rho||\underline{\alpha}_d||)}{(\rho||\underline{\alpha}_d||)^{\frac d2-1}}d\rho\\
&=\sum_{k=0}^\infty\frac{(-1)^k}{k!\Gamma(k+\frac d2)}\frac{(||\underline{\alpha}_d||)^{2k}}{2^{2k+\frac d2-1}}\int_0^{ct}\rho^{d+2k-1}(c^2t^2-\rho^2)^{\frac{n}{2}(d-1)-1}d\rho=(\rho=ct\sqrt{y})\notag\\		
&=\sum_{k=0}^\infty\frac{(-1)^k}{k!\Gamma(k+\frac d2)}\frac{(||\underline{\alpha}_d||)^{2k}}{2^{2k+\frac d2}}(ct)^{(n+1)(d-1)+2k-1}\int_0^{1}y^{\frac d2+k-1}(1-y)^{\frac{n}{2}(d-1)-1}dy\notag\\
&=	\frac{(ct)^{(n+1)(d-1)-1}}{2^{\frac d2}}\Gamma\left(\frac{n}{2}(d-1)\right)\sum_{k=0}^\infty\frac{(-1)^k}{k!}\frac{(ct||\underline{\alpha}_d||)^{2k}}{2^{2k}}\frac{1}{\Gamma(k+\frac{n+1}{2}(d-1)+\frac12)}\notag
\end{align}
Therefore, plugging in the expression \eqref{eq:proofcf2} in \eqref{eq:proofcf}, we immediately obtain the characteristic function \eqref{cf}.

Similar calculations hold for the characteristic function of the distribution \eqref{condlaw2} and then it is not hard to obtain the result \eqref{cf2}.
\end{remark}

\begin{remark}
It is not hard to show that $p_{\underline{\bf X}_d}(\underline{\bf x}_d,t;n)$ and $p_{\underline{\bf Y}_d}(\underline{\bf y}_d,t;n)$ integrate to 1. Indeed
\begin{align*}
\int_{\mathcal{H}_{ct}^d}p_{\underline{\bf X}_d}(\underline{\bf x}_d,t;n)\prod_{j=1}^ddx_j
&=\frac{2\Gamma(\frac{n+1}{2}(d-1)+\frac12)}{\Gamma(\frac d2)(ct)^{(n+1)(d-1)-1}\Gamma(\frac{n}{2}(d-1))}\int_0^{ct}\rho^{d-1}(c^2t^2- \rho^2)^{\frac{n}{2}(d-1)-1}d\rho\\
&=(\rho=ct\sqrt{y})\\
&=\frac{\Gamma(\frac{n+1}{2}(d-1)+\frac12)}{\Gamma(\frac d2)\Gamma(\frac{n}{2}(d-1))}\int_0^{1}y^{\frac {d}{2}-1}(1-y)^{\frac{n}{2}(d-1)-1}dy=1.
\end{align*}
For the density law $p_{\underline{\bf Y}_d}(\underline{\bf y}_d,t;n)$ similar calculations hold.
\end{remark}
\begin{remark}
From \eqref{condlaw} (for $d=2$) and \eqref{condlaw2} (for $d=4$), we can extract the distribution of $\underline{\bf X}_d(t),t>0,$ and $\underline{\bf Y}_d(t),t>0,$ respectively
\begin{equation}\label{planarlaw}
p_{\underline{\bf X}_2}(\underline{\bf x}_2,t;n)=\frac{n}{2\pi(ct)^n}(c^2t^2-||\underline{\bf x}_2||^2)^{\frac n2-1}
\end{equation}
\begin{equation}\label{fourlaw}
p_{\underline{\bf Y}_4}(\underline{\bf y}_4,t;n)=\frac{n(n+1)}{\pi^2(ct)^{2n+2}}(c^2t^2-||\underline{\bf y}_4||^2)^{n-1}
\end{equation}
which have been obtained in Orsingher and De Gregorio (2007) under the assumption of displacements separated by intervals with uniform joint distribution
$$f(\tau_1,...,\tau_n)=\frac{n!}{t^n}$$
where $0<\tau_j<t-\sum_{k=0}^{j-1}\tau_k,\,1\leq j\leq n,\,\tau_{n+1}=t-\sum_{j=1}^n\tau_j$. In this case an homogenous Poisson process governs the change of orientation of the steps $c\tau_j,\,j=1,...,n$.
\end{remark}
\begin{remark}
From \eqref{condlaw} and $\eqref{condlaw2}$, we are able to derive the exact distribution of a random flight moving in $\mathbb{R}^3$, i.e.
\begin{equation}\label{eq:three1}
p_{\underline{\bf X}_{3}}(\underline{\bf x}_3,t;n)=\frac{\Gamma(n+\frac 32)}{\Gamma(n)}\frac{(c^2t^2-||\underline{\bf x}_3||^2)^{n-1}}{\pi^{\frac 32}(ct)^{2n+1}},\quad n\geq 1,||\underline{\bf x}_3||<ct
\end{equation}
\begin{equation}\label{eq:three2}
p_{\underline{\bf  Y}_{3}}(\underline{\bf y}_3,t;n)=\frac{\Gamma(\frac{n+1}{2}+1)}{\Gamma(\frac n2)}\frac{(c^2t^2-||\underline{\bf y}_3||^2)^{\frac n2-1}}{\pi^{\frac 32}(ct)^{n+1}},\quad n\geq 1,||\underline{\bf y}_3||<ct.
\end{equation}
The following relationship between the distributions  \eqref{eq:three1} and \eqref{eq:three2} emerges
\begin{equation}
p_{\underline{\bf X}_{3}}(\underline{\bf x}_3,t;n)=p_{\underline{\bf Y}_{3}}(\underline{\bf x}_3,t;2n),
\end{equation}
which tells us that a random flight developing in $\mathbb{R}^3$ according to the Dirichlet law $f_1(\tau_1,...,\tau_n)$ has the same density of a three-dimensional random flight where the steps have joint distribution $f_2(\tau_1,...,\tau_{2n})$.

For $n=1$ and $n=2$, the distributions \eqref{eq:three1} and \eqref{eq:three2} provide us the uniform distribution inside the sphere $\mathcal{H}_{ct}^3$(as emerges from Table \ref{unifcond}), that is
$$p_{\underline{\bf X}_{3}}(\underline{\bf x}_3,t;1)=p_{\underline{\bf  Y}_{3}}(\underline{\bf y}_3,t;2)=\frac{\Gamma(\frac 52)}{\pi^{\frac 32}(ct)^3}.$$
Furthermore, we observe that
$$p_{\underline{\bf  Y}_{3}}(\underline{\bf y}_3,t;1)=\frac{1}{\pi^{2}(ct)^{2}}\frac{1}{\sqrt{c^2t^2-||\underline{\bf y}_3||^2}}$$ 
which corresponds to (4.1a) in Orsingher and De Gregorio (2007) for $n=0$. The random traveller for one change of direction is more likely to be near the sphere surface $\partial \mathcal{H}_{ct}^3$ while for two changes of direction his position is uniformly distributed inside the sphere. This has been commented by Franceschetti (2007) for a planar random motion.
\end{remark}

\begin{remark}
It is interesting to note that originally the problem of the random flights has been tackled by considering the length of the steps $c\tau_j$ constant and equal to $\Delta$. Then, in this case the following distribution emerges (compare with Watson, 1922, pag.421, with suitable adjustments of the parameters)
\begin{align*}
p_{\underline{\bf  X}_d}(\underline{\bf x}_d,t;n) =\frac{2^{\left(\frac{d}{2}-1\right)n-1}\pi^{-\frac{d}{2}}\left[\Gamma\left(\frac{d}{2}\right)\right]^{n+1}}{\left(\Delta||\underline{\bf x}_d||^2\right)^{\frac{d}{2}-1}}
\int_0^\infty
\rho^{-\left(\frac{d}{2}-1\right)n+1}J_{\frac{d}{2}-1}\left(\rho||\underline{\bf x}_d||\right)\left(J_{\frac{d}{2}-1}\left(\rho
\Delta\right)\right)^{n+1}d\rho.
\end{align*}
 The above integral can not be worked out. Therefore, as suggested by Theorem \ref{th2}, it is crucial to randomize the length of the steps in order to provide a general solution of the problem of the random flights for the real space having dimension $d$. 
 \end{remark}

The distance from the origin of the position reached by the $d$-dimensional random flights $\underline{\bf X}_d(t),t>0,$ after $n+1$ steps, that is $R_d(t)=||\underline{\bf X}_d(t)||,t>0$,  has the following distribution function
\begin{eqnarray*}
P\{R_d(t)<r\}=\frac{2}{(ct)^{(n+1)(d-1)-1}}\frac{\Gamma(\frac{n+1}{2}(d-1)+\frac12)}{\Gamma(\frac d2)\Gamma(\frac{n}{2}(d-1))}\int_0^r\rho^{d-1}(c^2t^2- \rho^2)^{\frac{n}{2}(d-1)-1}d\rho
\end{eqnarray*}
and therefore the density law of $R_d(t)$ becomes
\begin{equation}\label{radiallaw}
p_{R_d}(r,t;n)=2\frac{\Gamma(\frac{n+1}{2}(d-1)+\frac12)}{\Gamma(\frac d2)\Gamma(\frac{n}{2}(d-1))}\frac{r^{d-1}(c^2t^2- r^2)^{\frac{n}{2}(d-1)-1}}{(ct)^{(n+1)(d-1)-1}}
\end{equation}
with $0<r<ct$.
Analogously, for $L_d(t)=||\underline{\bf Y}_d(t)||,t>0$, we obtain that
\begin{equation}\label{radiallaw2}
p_{L_d}(l,t;n)=2\frac{\Gamma((n+1)(\frac d2-1)+1)}{\Gamma(\frac d2)\Gamma(n(\frac d2-1))}\frac{l^{d-1}(c^2t^2- l^2)^{n(\frac d2-1)-1}}{(ct)^{2(n+1)(\frac d2-1)}}
\end{equation}
with $0<l<ct$. The behavior of densities \eqref{radiallaw} and \eqref{radiallaw2} is outlined in Figure \ref{profile} for $d=3$ and different values of $n$.

  \begin{figure}
  \begin{center}
\includegraphics[angle=0,width=1\textwidth]{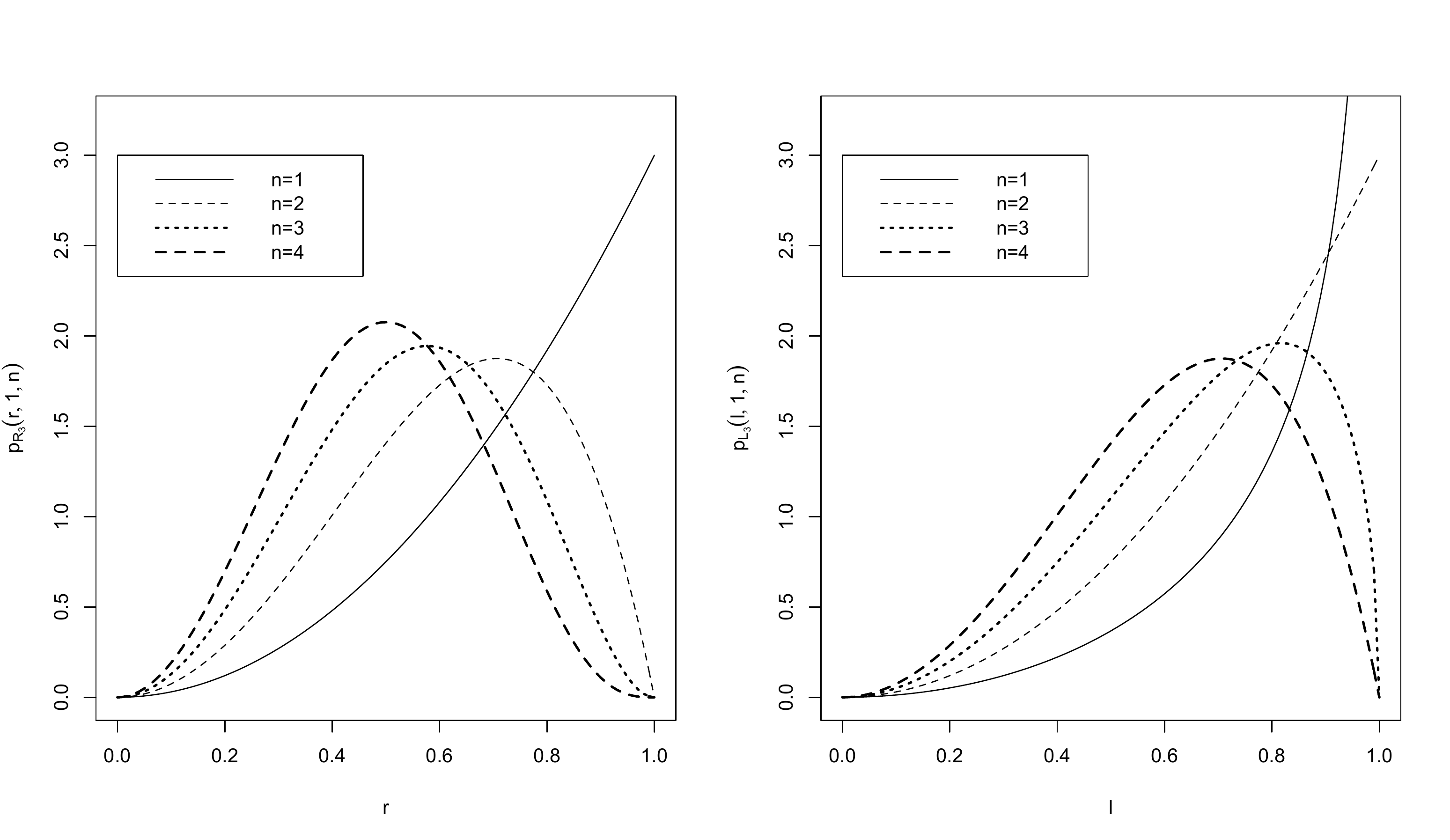}
\caption{The behavior of the densities of $R_3(t)$ and $L_3(t)$, with $c=1$ and $t=1,$ for $n=1,2,3,4$.}\label{profile}
\end{center}
\end{figure}

We present now the expression of moments of the radial processes $R_d(t), L_d(t),t>0$.
\begin{theorem}
For  $p\geq 1$ and $n\geq 1$, we have the following general results
\begin{equation}\label{pmeanR}
E\left\{R_d(t)\right\}^p=\frac{\Gamma(\frac{p+d}{2})\Gamma(\frac{n+1}{2}(d-1)+\frac12)}{\Gamma(\frac d2)\Gamma(\frac{p+d}{2}+\frac n2(d-1))}(ct)^p,\quad d\geq 2,
\end{equation}
and
\begin{equation}\label{pmeanL}
E\left\{L_{d}(t)\right\}^p=\frac{\Gamma(\frac{p+d}{2})\Gamma((n+1)(\frac d2-1)+1)}{\Gamma(\frac d2)\Gamma(\frac{p+d}{2}+n(\frac d2-1))}(ct)^p,\quad d\geq 3.
\end{equation}
\end{theorem}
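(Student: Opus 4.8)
The plan is to integrate directly against the explicit radial densities \eqref{radiallaw} and \eqref{radiallaw2} derived just above. Since $R_d(t)$ and $L_d(t)$ are supported on the bounded interval $(0,ct)$, every moment is finite and is given by a convergent integral, so no integrability issues arise and the computation is a matter of reducing each integral to a Beta function and simplifying.

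For \eqref{pmeanR} I would start from
\[
E\{R_d(t)\}^p=\int_0^{ct} r^p\,p_{R_d}(r,t;n)\,dr
=\frac{2\,\Gamma\!\left(\tfrac{n+1}{2}(d-1)+\tfrac12\right)}{\Gamma\!\left(\tfrac d2\right)\Gamma\!\left(\tfrac n2(d-1)\right)(ct)^{(n+1)(d-1)-1}}\int_0^{ct} r^{p+d-1}(c^2t^2-r^2)^{\frac n2(d-1)-1}\,dr,
\]
and then make the change of variable $r=ct\sqrt{y}$. This converts the remaining integral into $\tfrac12 (ct)^{p+d+n(d-1)-2}\int_0^1 y^{\frac{p+d}{2}-1}(1-y)^{\frac n2(d-1)-1}\,dy=\tfrac12(ct)^{p+d+n(d-1)-2}B\!\left(\tfrac{p+d}{2},\tfrac n2(d-1)\right)$. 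The factor $\tfrac12$ cancels the $2$ in front, the $\Gamma\!\left(\tfrac n2(d-1)\right)$ produced by the Beta function cancels the one in the denominator of the density prefactor, and the exponent of $ct$ reduces to $-\big[(n+1)(d-1)-1\big]+\big[p+d+n(d-1)-2\big]=p$; collecting terms gives exactly \eqref{pmeanR}.

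For \eqref{pmeanL} the argument is word-for-word the same, now starting from \eqref{radiallaw2}: the substitution $l=ct\sqrt{y}$ produces $\tfrac12(ct)^{p+d+2n(\frac d2-1)-2}B\!\left(\tfrac{p+d}{2},n(\tfrac d2-1)\right)$, the factor $\Gamma\!\left(n(\tfrac d2-1)\right)$ cancels, and the exponent of $ct$ collapses to $-2(n+1)(\tfrac d2-1)+\big[p+d+2n(\tfrac d2-1)-2\big]=p$, leaving the stated expression. The only point requiring care is the bookkeeping of the exponents of $ct$ and the cancellation of Gamma factors; convergence of the Beta integrals is immediate since $\tfrac n2(d-1)>0$ for $n\ge1$, $d\ge2$ (resp. $n(\tfrac d2-1)>0$ for $n\ge1$, $d\ge3$), and $\tfrac{p+d}{2}>0$. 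There is no substantial obstacle.
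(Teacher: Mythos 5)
Your computation is correct and is essentially identical to the paper's own proof: the authors also integrate $r^p$ against the radial density \eqref{radiallaw}, substitute $\rho=ct\sqrt{y}$ to obtain a Beta integral, and cancel the $\Gamma\left(\tfrac n2(d-1)\right)$ factor (and likewise for $L_d(t)$ via \eqref{radiallaw2}). Your exponent bookkeeping for $ct$ checks out in both cases.
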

\begin{proof}
In view of \eqref{radiallaw}, we obtain that

\begin{eqnarray*}
E\left\{R_d(t)\right\}^p&=&\frac{2\Gamma(\frac{n+1}{2}(d-1)+\frac12)}{\Gamma(\frac d2)(ct)^{(n+1)(d-1)-1}\Gamma(\frac n2(d-1))}\int_0^{ct}r^{p+d-1}(c^2t^2-r^2)^{\frac n2(d-1)-1}dr\notag\\
&=&\frac{(ct)^p\Gamma(\frac{n+1}{2}(d-1)+\frac12)}{\Gamma(\frac d2)\Gamma(\frac n2(d-1))}\int_0^1y^{\frac{pp+d}{2}-1}(1-y)^{\frac n2(d-1)-1}dy\notag\\
&=&\frac{\Gamma(\frac{p+d}{2})\Gamma(\frac{n+1}{2}(d-1)+\frac12)}{\Gamma(\frac d2)\Gamma(\frac{p+d}{2}+\frac n2(d-1))}(ct)^p.
\end{eqnarray*}
By similar steps we arrive at \eqref{pmeanL}.
\end{proof}

\begin{remark}
From \eqref{pmeanR} and \eqref{pmeanL}, we can extract the following results
\begin{align}
E\left\{R_d(t)\right\}=\frac{\Gamma(\frac{d-1}{2})}{\Gamma(\frac d2)}\frac{d-1}{n(d-1)+d}ct,
\quad E\left\{R_d(t)\right\}^2
=\frac{d}{n(d-1)+d}(ct)^2,
\end{align}
for $d\geq 2,n\geq 1$, and
\begin{align}
E\left\{L_{d}(t)\right\}
=\frac{\Gamma(\frac{d-1}{2})}{\Gamma(\frac d2)}\frac{d-1}{(n+1)(d-2)+1}ct,\quad E\left\{L_{d}(t)\right\}^2=\frac{d}{n(d-2)+d}(ct)^2,
\end{align}
for $d\geq 3,n\geq 1$. We observe that
\begin{align*}
\frac{E\{R_d(t)\}}{E\{L_d(t)\}}=1-\frac{1}{d-1+\frac{1}{n+1}}, \quad \frac{E\{R_d(t)\}^2}{E\{L_d(t)\}^2}=1-\frac{1}{d-1+\frac{d}{n}}
\end{align*}
and this shows that
\begin{align*}
E\{R_d(t)\}\leq E\{L_d(t)\}, \quad E\{R_d(t)\}^2\leq E\{L_d(t)\}^2
\end{align*}
for all $d\geq 3$.
\end{remark}

\begin{theorem} The projection of the processes $\underline{\bf X}_d(t),t>0,$ and $\underline{\bf Y}_d(t),t>0,$ onto a lower space of dimension $m$, leads to the following marginal distributions
 \begin{equation}\label{eq:marg}
 f_{\underline{\bf X}_m}^d(\underline{\bf x}_m,t;n)=\frac{\Gamma(\frac{(n+1)}{2}(d-1)+\frac12)}{\Gamma(\frac{(n+1)}{2}(d-1)+\frac{1-m}{2})}\frac{(c^2t^2-||\underline{\bf x}_m||^2)^{\frac{(n+1)}{2}(d-1)-\frac{m+1}{2}}}{\pi^{\frac m2}(ct)^{(n+1)(d-1)-1}},
 \end{equation}
 \begin{equation}\label{eq:marg2}
f_{\underline{\bf Y}_m}^d(\underline{\bf y}_m,t;n)= \frac{\Gamma((n+1)(\frac d2-1)+1)}{\Gamma((n+1)(\frac d2-1)+1-\frac m2)}\frac{(c^2t^2- ||\underline{\bf y}_{m}||^2)^{(n+1)(\frac d2-1)-\frac m2}}{\pi^{\frac m2}(ct)^{2(n+1)(\frac d2-1)}},
   \end{equation}
 with $||\underline{\bf x}_m||<ct$, $||\underline{\bf y}_m||<ct$ and $1\leq m< d$. For $m=d$ the densities \eqref{eq:marg} and \eqref{eq:marg2} coincide with \eqref{condlaw} and \eqref{condlaw2}.
 \end{theorem}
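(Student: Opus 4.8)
The plan is to deduce \eqref{eq:marg} and \eqref{eq:marg2} directly from the densities \eqref{condlaw} and \eqref{condlaw2}, exploiting the fact that both have the isotropic form $A(c^2t^2-||\underline{\bf x}_d||^2)^{b}$ on the ball $\mathcal{H}_{ct}^d$. Splitting a generic point of $\mathbb{R}^d$ as $(\underline{\bf x}_m,\underline{\bf u})$ with $\underline{\bf u}\in\mathbb{R}^{d-m}$ and using $||\underline{\bf x}_d||^2=||\underline{\bf x}_m||^2+||\underline{\bf u}||^2$, the marginal of the first $m$ coordinates of $\underline{\bf X}_d(t)$ is
\begin{equation*}
f_{\underline{\bf X}_m}^d(\underline{\bf x}_m,t;n)=A\int_{\{||\underline{\bf u}||<r\}}(r^2-||\underline{\bf u}||^2)^{b}\,d\underline{\bf u},\qquad r^2:=c^2t^2-||\underline{\bf x}_m||^2,
\end{equation*}
with $b=\frac n2(d-1)-1$ and $A=\frac{\Gamma(\frac{n+1}{2}(d-1)+\frac12)}{\pi^{d/2}(ct)^{(n+1)(d-1)-1}\Gamma(\frac n2(d-1))}$ as displayed after \eqref{eq:condlawint}. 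Passing to polar coordinates in $\mathbb{R}^{d-m}$ (the surface area of the unit sphere being $2\pi^{(d-m)/2}/\Gamma(\frac{d-m}{2})$) and substituting $s=r\sqrt w$ in the radial integral $\int_0^r s^{d-m-1}(r^2-s^2)^{b}\,ds$ turns it into $\frac12 r^{d-m+2b}B(\frac{d-m}{2},b+1)$, a Beta integral, so that
\begin{equation*}
f_{\underline{\bf X}_m}^d(\underline{\bf x}_m,t;n)=A\,\pi^{(d-m)/2}\,\frac{\Gamma(b+1)}{\Gamma(\frac{d-m}{2}+b+1)}\,r^{d-m+2b}.
\end{equation*}

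The remaining step is bookkeeping of the constants: $\Gamma(b+1)=\Gamma(\frac n2(d-1))$ cancels the denominator of $A$, $\pi^{(d-m)/2}/\pi^{d/2}=\pi^{-m/2}$, and the elementary identities $\frac{d-m}{2}+b+1=\frac{n+1}{2}(d-1)+\frac{1-m}{2}$ and $\frac{d-m}{2}+b=\frac{n+1}{2}(d-1)-\frac{m+1}{2}$ (so that $r^{d-m+2b}=(c^2t^2-||\underline{\bf x}_m||^2)^{\frac{n+1}{2}(d-1)-\frac{m+1}{2}}$) turn the last display into exactly \eqref{eq:marg}. The computation for $\underline{\bf Y}_d(t)$ is verbatim after replacing $b$ by $n(\frac d2-1)-1$ and $A$ by the normalizing constant of \eqref{condlaw2}, and yields \eqref{eq:marg2}; for $m=d$ no integration is performed and one recovers \eqref{condlaw}, \eqref{condlaw2}. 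The only points requiring attention are the integrability conditions $b>-1$ and $\frac{d-m}{2}+b>-1$, both of which hold for all $n\geq1$, $d\geq2$ (resp. $d\geq3$) and $1\leq m<d$, so there is no genuine obstacle: this route is essentially a single Beta integral plus Gamma-function simplification.

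As a cross-check, consistent with the method of Theorem \ref{th2}, one may instead observe that putting $\alpha_{m+1}=\dots=\alpha_d=0$ in \eqref{cf} (resp. \eqref{cf2}) replaces $||\underline{\alpha}_d||$ by $||\underline{\alpha}_m||$ while leaving the order of the Bessel function unchanged, so the characteristic function of $\underline{\bf X}_m(t)$ must be inverted in $\mathbb{R}^m$ by the same hyperspherical argument: formula \eqref{intangle2} with $d$ replaced by $m$, followed by the Weber--Schafheitlin integral used in Theorem \ref{th2} with $\nu=\frac{n+1}{2}(d-1)-\frac32$ and $\mu=\frac m2-1$. This second route has a mild drawback, namely that it requires $\mu>0$, i.e.\ $m\geq3$, and would need a separate treatment (or an analytic-continuation argument in $m$) for $m=1,2$. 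Since the direct integration above is uniformly valid for every $1\leq m<d$, I would take it as the main proof and keep the Fourier computation only as a verification.
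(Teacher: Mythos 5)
Your proof is correct and follows essentially the same route as the paper: both marginalize the closed-form isotropic density $A(c^2t^2-||\underline{\bf x}_d||^2)^{b}$ directly, reducing the integral over the discarded coordinates to a Beta integral and then simplifying the Gamma factors. The only difference is organizational --- you integrate out all $d-m$ coordinates at once via polar coordinates in $\mathbb{R}^{d-m}$, whereas the paper integrates one coordinate at a time, picking up a factor $\sqrt{\pi}\,\Gamma(\cdot)/\Gamma(\cdot+\tfrac12)$ at each of the $d-m$ steps; your one-step version is a little more compact, and your bookkeeping of the exponents, the $\pi^{-m/2}$ factor, and the integrability conditions all checks out.
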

 \begin{proof}
We start by observing that the projection of the random process $\underline{\bf X}_d(t),t>0,$ onto the space $\mathbb{R}^m$, represents a random flight with $m$ components having density law given by
 \begin{align*}
 f_{\underline{\bf X}_m}^d(\underline{\bf x}_m,t;n)=\int_{-\sqrt{c^2t^2-||\underline{\bf x}_m||^2}}^{\sqrt{c^2t^2-||\underline{\bf x}_m||^2}}dx_{m+1}\cdots\int_{-\sqrt{c^2t^2-||\underline{\bf x}_{d-2}||^2}}^{\sqrt{c^2t^2-||\underline{\bf x}_{d-2}||^2}}dx_{d-1}\int_{-\sqrt{c^2t^2-||\underline{\bf x}_{d-1}||^2}}^{\sqrt{c^2t^2-||\underline{\bf x}_{d-1}||^2}} p_{\underline{\bf  X}_d}(\underline{\bf x}_d,t;n)dx_{d}.
\end{align*}
Then
\begin{align*}
&\int_{-\sqrt{c^2t^2-||\underline{\bf x}_{d-1}||^2}}^{\sqrt{c^2t^2-||\underline{\bf x}_{d-1}||^2}}p_{\underline{\bf X}_d}(\underline{\bf x}_d,t;n) dx_{d}\\
&=\frac{2}{\pi^{d/2}(ct)^{(n+1)(d-1)-1}}\frac{\Gamma(\frac{n+1}{2}(d-1)+\frac12)}{\Gamma(\frac{n}{2}(d-1))}\int_{0}^{\sqrt{c^2t^2-||\underline{\bf x}_{d-1}||^2}}(c^2t^2- ||\underline{\bf x}_d||^2)^{\frac{n}{2}(d-1)-1}dx_d\\
&=(x_d=\sqrt{w}\sqrt{c^2t^2-||\underline{\bf x}_{d-1}||^2})\\
&=\frac{\Gamma(\frac{n+1}{2}(d-1)+\frac12)}{\pi^{d/2}(ct)^{(n+1)(d-1)-1}}\frac{(c^2t^2- ||\underline{\bf x}_{d-1}||^2)^{\frac{n}{2}(d-1)-\frac12}}{\Gamma(\frac{n}{2}(d-1))}\int_{0}^1w^{-\frac12}(1-w)^{\frac{n}{2}(d-1)-1}dw\\
&=\frac{1}{\pi^{d/2}(ct)^{(n+1)(d-1)-1}}\frac{\Gamma(\frac{n+1}{2}(d-1)+\frac12)\sqrt{\pi}}{\Gamma(\frac{n}{2}(d-1)+\frac12)}(c^2t^2- ||\underline{\bf x}_{d-1}||^2)^{\frac{n}{2}(d-1)-\frac12}.
\end{align*}
For the integral with respect to $x_d$, we have that
 \begin{align*}
& \frac{2}{\pi^{d/2}(ct)^{(n+1)(d-1)-1}}\frac{\Gamma(\frac{n+1}{2}(d-1)+\frac12)\sqrt{\pi}}{\Gamma(\frac{n}{2}(d-1)+\frac12)}\int_{0}^{\sqrt{c^2t^2-||\underline{\bf x}_{d-2}||^2}}(c^2t^2- ||\underline{\bf x}_{d-1}||^2)^{\frac{n}{2}(d-1)-\frac12}dx_{d-1}\\
&=(x_{d-1}=\sqrt{w}\sqrt{c^2t^2-||\underline{\bf x}_{d-2}||^2})\\
&= \frac{1}{\pi^{d/2}(ct)^{(n+1)(d-1)-1}}\frac{\Gamma(\frac{n+1}{2}(d-1)+\frac12)\sqrt{\pi}}{\Gamma(\frac{n}{2}(d-1)+\frac12)}(c^2t^2- ||\underline{\bf x}_{d-2}||^2)^{\frac{n}{2}(d-1)}\int_0^1w^{-\frac12}(1-w)^{\frac{n}{2}(d-1)-\frac12}dw\\
&= \frac{1}{\pi^{d/2}(ct)^{(n+1)(d-1)-1}}\frac{\Gamma(\frac{n+1}{2}(d-1)+\frac12)\pi}{\Gamma(\frac{n}{2}(d-1)+1)}(c^2t^2- ||\underline{\bf x}_{d-2}||^2)^{\frac{n}{2}(d-1)}.
 \end{align*}
 Therefore calculating the successive integrals in this way, we obtain that
 $$ f_{\underline{\bf X}_m}^d(\underline{\bf x}_m,t;n)=\frac{\Gamma(\frac{n+1}{2}(d-1)+\frac12)\pi^{\frac{d-m}{2}}}{\Gamma(\frac{n}{2}(d-1)+\frac{d-m}{2})} \frac{(c^2t^2- ||\underline{\bf x}_{m}||^2)^{\frac{n}{2}(d-1)-1+\frac{d-m}{2}}}{\pi^{d/2}(ct)^{(n+1)(d-1)-1}},$$
with $ ||\underline{\bf x}_{m}||<ct$, and by simple manipulations the expression \eqref{eq:marg} emerges.
     
 By using the same approach we derive the result \eqref{eq:marg2} concerning the projection of the random flights $\underline{\bf Y}_d(t),t>0$.
\end{proof}

 \begin{remark}
The functions of the form
 $$q(\underline{\bf x}_d,t)=\left(c^2t^2-||\underline{\bf x}_d||^2\right)^m,\quad ||\underline{\bf x}_d||<ct,$$
 appearing in all densities \eqref{condlaw}, \eqref{condlaw2}, \eqref{eq:marg} and \eqref{eq:marg2}, are solutions to the telegraph-type equations
\begin{equation}\label{telegraphtype}
\frac{\partial^2 q}{\partial t^2}=c^2\sum_{j=1}^d\frac{\partial^2 q}{\partial x_j^2}+\frac{2m-1+d}{t}\frac{\partial q}{\partial t}.
\end{equation}
In particular, for $m=-\frac{d-1}{2}$ equation \eqref{telegraphtype} simplifies and becomes the $d$-dimensional wave equation
$$\frac{\partial^2 q}{\partial t^2}=c^2\sum_{j=1}^d\frac{\partial^2 q}{\partial x_j^2}.$$
 \end{remark}
  
  \begin{remark}
  
 From \eqref{eq:marg}, for $d=2$ and $m=1$, we obtain the marginal density of the conditional distribution of a planar random flight with uniformly distributed switching times, that is
$$f_{X_1}^2(x_1,t;n)=\frac{\Gamma(\frac n2)\Gamma(\frac n2+1)}{2\pi\Gamma(n)}\left(\frac{2}{ct}\right)^n(c^2t^2-x_1^2)^{\frac{n-1}{2}},\quad n\geq 1,\,|x_1|<ct.$$
Furthermore, by setting $d=4$ and $m=1,2,3$ in the densities  \eqref{eq:marg} and \eqref{eq:marg2}, we derive the probability distributions of the projections of a four-dimensional random flight, respectively $\underline{\bf X}_4(t)$ and $\underline{\bf Y}_4(t)$, onto the lower spaces (see Table \ref{projection}). Analogously, Table \ref{projection2} summarizes the marginal density laws of the three-dimensional random flights onto the spaces $\mathbb{R}^2$ and $\mathbb{R}$.

From Table \ref{projection} emerges that for $n=2r+1,r=0,1,...$ the distribution of a planar random flight (with intertimes having uniform law) $p_{\underline{\bf X}_{2}}(\underline{\bf x}_{2},t;n)$ coincides with $f_{\underline{\bf X}_{2}}^3(\underline{\bf x}_{2},t;n)$. Furthermore, the distribution $p_{\underline{\bf Y}_{4}}(\underline{\bf y}_{4},t;n)$ coincides with the result obtained in Orsingher and De Gregorio (2007), formula (3.2). Therefore, the marginal densities of $\underline{\bf Y}_{4}(t),t>0,$ of the Table below coincide with the probability distributions (4.1a), (4.1b) and (4.1c) in  Orsingher and De Gregorio (2007).
 
 \begin{table}[h]
\begin{center}
\begin{tabular}{c|c|c}
$d=4$&$\underline{\bf X}_4(t)$& $\underline{\bf Y}_4(t)$\\		
\hline&&\\
$m=3$&$\frac{(\Gamma(\frac32n+2))^22^{3n+3}}{\pi^2(ct)^{3n+2}\Gamma(3n+4)}
(c^2t^2- ||\underline{\bf x}_{3}||^2)^{\frac32n-\frac{1}{2}}$& $\frac{\Gamma(n+2)\Gamma(n)2^{2n-1}}{\pi^2(ct)^{2n+2}\Gamma(2n)}
(c^2t^2- ||\underline{\bf y}_{3}||^2)^{n-\frac{1}{2}}$\\&&\\
$m=2$&$\frac{\frac32n+1}{\pi(ct)^{3n+2}}
(c^2t^2- ||\underline{\bf x}_{2}||^2)^{\frac32n}$& $\frac{(n+1)}{\pi(ct)^{2n+2}}
(c^2t^2-||\underline{\bf y}_{2}||^2)^n$\\&&\\
$m=1$&$\frac{\Gamma(\frac32n+2)\Gamma(\frac32n+1)2^{3n+1}}{\pi(ct)^{3n+2}\Gamma(3n+2)}
(c^2t^2-  x_{1}^2)^{\frac32n+\frac{1}{2}}$&$\frac{\Gamma(n+2)\Gamma(n+1)2^{2n+1}}{\pi(ct)^{2n+2}\Gamma(2n+2)}
(c^2t^2-y_1^2)^{n+\frac{1}{2}}$
\end{tabular}
\end{center}
\caption{The density laws of the processes representing the projections onto the lower spaces of $\underline{\bf X}_4(t),t>0,$ and $\underline{\bf Y}_4(t),t>0$.}\label{projection}
\end{table}%

 \begin{table}[h]
\begin{center}
\begin{tabular}{c|c|c}
$d=3$&$\underline{\bf X}_3(t)$& $\underline{\bf Y}_3(t)$\\		
\hline&&\\
$m=2$&$\frac{n+\frac12}{\pi(ct)^{2n+1}}
(c^2t^2- ||\underline{\bf x}_{2}||^2)^{n-\frac12}$& $\frac{(n+1)}{2\pi(ct)^{n+1}}
(c^2t^2-||\underline{\bf y}_{2}||^2)^{\frac{n-1}{2}}$\\&&\\
$m=1$&$\frac{\Gamma(n+\frac32)}{\sqrt{\pi}(ct)^{2n+1}\Gamma(n+1)}
(c^2t^2-  x_{1}^2)^{n}$&$\frac{\Gamma(\frac n2+\frac32)}{\sqrt{\pi}(ct)^{n+1}\Gamma(\frac n2+1)}
(c^2t^2-y_1^2)^{\frac{n}{2}}$
\end{tabular}
\end{center}
\caption{The density laws of the processes representing the projections onto the lower spaces of $\underline{\bf X}_3(t),t>0,$ and $\underline{\bf Y}_3(t),t>0$.}\label{projection2}
\end{table}%
\end{remark}

We are able to provide an alternative stochastic representation of the motion on the real line representing the projection of the $d$-dimensional random flight $\underline{\bf X}_{d}(t),t>0,$ as follows. We consider a random walker moving, with velocity $c>0$, forward for a time $tT_n^d$, where $T_n^d$ is a $Beta(\frac{n+1}{2}(d-1),\frac{n+1}{2}(d-1))$ random variable, and backward for the remaining time $t(1-T_n^d)$. In other words, the position $\hat X(t),t>0,$ reached by the random traveller at time $t$ is equal to
\begin{equation}\label{eq:represX}
\hat X(t)=ct\left[T_n^d-(1-T_n^d)\right].
\end{equation}

Therefore, the density law of $\hat X(t),t>0,$ becomes
\begin{eqnarray}\label{eq:marg1}
p_{\hat X}(x,t;n)&=&\frac{d}{dx}P\left\{\hat X(t)<x\right\}=\frac{d}{dx}P\left\{T_n^d<\frac{2ct+x}{2ct}\right\}\notag\\
&=&\frac{\Gamma((n+1)(d-1))}{(\Gamma(\frac{n+1}{2}(d-1)))^2}\frac{d}{dx}\int_0^{\frac{2ct+x}{2ct}}w^{\frac{n+1}{2}(d-1)-1}(1-w)^{\frac{n+1}{2}(d-1)-1}dw\notag\\
&=&\frac{1}{(2ct)^{(n+1)(d-1)-1}}\frac{\Gamma((n+1)(d-1))}{(\Gamma(\frac{n+1}{2}(d-1)))^2}(c^2t^2-x^2)^{\frac{n+1}{2}(d-1)-1}
\end{eqnarray}
and coincides with the distribution \eqref{eq:marg} with $m=1$. We observe that from \eqref{eq:marg1} for $d=3$ and $m=1$, we have that 
$$f_{X_1}^3(x_1,t;n)=\frac{P\{T(t)\in dx_1|N(t)=2n+1\}}{dx_1}=\frac{(2n+1)!}{(n!)^2}\frac{1}{(2ct)^{2n+1}}(c^2t^2-x_1^2)^n$$
where the duplication formula for Gamma functions has been applied and $T(t),t>0,$ represents the standard telegraph process. The above distribution has been obtained by De Gregorio {\it et al.} (2005) (see formula (2.17)) by applying the order statistics.

We can represent the one-dimensional motion underlying distribution \eqref{eq:marg1} as a sequence of alternating forward and backward displacements at speed $c$, where the change of direction occurs at Poisson times $t_j$ with $\tau_j=t_j-t_{j-1},\,j=1,...,n$. Since the intertimes $t_j,\,j=1,...,n$ possess uniform joint distribution, in force of exchangeability we can rearrange the displacements and put together forward steps and backward ones. In our case, we have for $d=2r+1,\,r=0,1,...,$ $(n+1)r$ forward displacements and an equal number of backward ones. The instant $T_{(n+1)r}^{2r+1}$ indicates the time where the last forward displacement occurs in the rearranged sequence and corresponds to the $(n+1)r$-th order statistics from a uniform distribution in $(0,t)$. This leads to the representation \eqref{eq:represX}.

The $d$-dimensional random flight producing in $\mathbb{R}^2$ the planar motion described, for example, in Stadje (1987), with distribution \eqref{planarlaw}, must satisfy the following relationship 
$$r=(n+1)(d-1)-1$$
among $d$, the number $r$ of changes of direction $n$ in $\mathbb{R}^d$ and the number of changes of direction of the probabilistically equivalent motion in $\mathbb{R}^2$ (in \eqref{eq:marg} write $(n+1)(d-1)=r+1$ and $m=2$ so that \eqref{planarlaw} emerges).

Analogously, for the projection onto $\mathbb{R}^1$ of $\underline{\bf Y}_{d}(t),t>0$, a similar representation holds true. Indeed, in this case we can write
\begin{equation}
\hat Y(t)=ct\left[S_n^d-(1-S_n^d)\right]
\end{equation}
where $S_n^d$ is a $Beta((n+1)(\frac d2-1)+\frac12,(n+1)(\frac d2-1)+\frac12)$ random variable. Then, we obtain that
\begin{align}
p_{\hat Y}(y,t;n)&=\frac{d}{dy}P\left\{\hat Y(t)<y\right\}=\frac{d}{dy}P\left\{S_n^d<\frac{2ct+y}{2ct}\right\}\\
&=\frac{\Gamma(2(n+1)(\frac d2-1)+1)}{(\Gamma((n+1)(\frac d2-1)+\frac12)))^2}\frac{(c^2t^2-y^2)^{(n+1)(\frac d2-1)-\frac 12}}{(2ct)^{2(n+1)(\frac d2-1)}}\notag
\end{align}
which coincides with the distribution \eqref{eq:marg2} with $m=1$.

\begin{remark}
The density  $f_{\underline{\bf X}_{m}}^d(\underline{\bf x}_{m},t;n)$ becomes uniform on the hypershere $\mathcal{H}_{ct}^m$ when
$
d=\frac{n+m+2}{n+1}, \, 1\leq m\leq d.
$
Analogously, the distribution \eqref{eq:marg2} is uniform on the hypershere $\mathcal{H}_{ct}^m$ if 
$
d=\frac{2n+m+2}{n+1}, \, 1\leq m\leq d
$
Admissible combinations of $m$ and $n$ are those for which the dimension $d$ is an integer number (see Table \ref{unifcond}). Some simulations presented in Le Caer (2010) give graphic evidence of the uniform law for $n=2$ and $d=2$.
\end{remark}

 \begin{table}[h]
\begin{center}
\begin{tabular}{c|c}
$\underline{\bf X}_d(t)$& $\underline{\bf Y}_d(t)$\\		
\hline\\
$n=2,\,d=m=2$&$n=1,\,m=2,\,d=3$\\
$n=m=1,\, d=2$&$n=2,\,d=m=3$\\
$n=1,\,d=m=3$&$n=1,\,d=m=4$
\end{tabular}
\end{center}
\caption{The values of $n,\,d$ and $m$ leading to the uniform distribution.}\label{unifcond}
\end{table}%

\begin{remark}
We expect that the density tends to infinity near the surface of the hypersphere $\mathcal{H}_{ct}^m$ for a small number $n$ of deviations and this is confirmed by the following inequalities
$
n<\frac{m-d+2}{d-1},
$
for $\underline{\bf X}_{d}(t),t>0$, and
$
n<\frac{m-d+2}{d-2},$
 for $\underline{\bf Y}_{d}(t),t>0$.
Since $m\leq d$ and $n$ is integer, we obtain that the first inequality holds for $n=1$ and $d=m=2$, and the second inequality is valid for $n=1$ and $d=m=3$. In all the remaining cases the distributions take a bell-shaped structure because the larger is the number of changes of orientation, the shorter become the displacements and the closer to the origin is the moving particle.
\end{remark}

\section{Unconditional probability distributions}

In order to obtain unconditional densities for $\underline{\bf X}_d(t),t>0,$ we randomize the number of deviations $\mathcal{N}_d(t)$, at time $t>0$, by assuming that it possesses the distribution of a fractional Poisson process (consult on this point Beghin and Orsingher, 2009).  In this context by fractional Poisson process we mean a process with distribution
\begin{equation}\label{lawgenpoi}
P\left\{\mathcal{N}_d(t)=n\right\}=\frac{1}{E_{\frac{d-1}{2},\frac d2}(\lambda t)}\frac{(\lambda t)^n}{\Gamma((\frac{d-1}{2})n+\frac d2)},\quad d\geq 2,\, n=0,1,...
\end{equation}
where $E_{\alpha,\beta}(x)=\sum_{k=0}^\infty \frac{x^k}{\Gamma(\alpha k+\beta)},\, x\in\mathbb{R},\alpha,\beta>0,$ is the generalized Mittag-Leffler function. The generating function of the probabilities is
$$G_{\mathcal{N}_d}(u,t)=\frac{E_{\frac{d-1}{2},\frac d2}(\lambda tu)}{E_{\frac{d-1}{2},\frac d2}(\lambda t)},\quad |u|\leq 1.$$
Since
\begin{equation}\label{eq:rel}
\frac{d}{dx}E_{\nu,\beta}(ax)=\frac a\nu\left[E_{\nu,\nu+\beta-1}(ax)+(1-\beta)E_{\nu,\nu+\beta}(ax)\right]
\end{equation}
we obtain that
\begin{eqnarray}
E\left\{\mathcal{N}_d(t)\right\}&=&\frac{d}{du}G_\mathcal{N}(u,t)|_{u=1}\notag\\
&=&\frac{2\lambda t}{(d-1)E_{\frac{d-1}{2},\frac d2}(\lambda t)}\left[E_{\frac{d-1}{2},d-\frac 32}(\lambda t)+\left(1-\frac d2\right)E_{\frac{d-1}{2},d-\frac 12}(\lambda t)\right]
\end{eqnarray}
The above result can also be obtained directly by using
$$E\left\{\mathcal{N}_d(t)\right\}=\frac{2}{d-1}\frac{1}{E_{\frac{d-1}{2},\frac d2}(\lambda t)}\left[\sum_{n=1}^\infty \frac{(\lambda t)^n}{\Gamma((\frac{d-1}{2})n+\frac d2)}\left(n\left(\frac{d-1}{2}\right)+\frac d2-1-\frac d2+1\right)\right]$$
and by performing some straightforward calculations. If $d=2$ the mean value of $\mathcal{N}_d(t)$ becomes
\begin{eqnarray*}
E\left\{\mathcal{N}_2(t)\right\}
=\frac{2\lambda t}{E_{\frac{1}{2},1}(\lambda t)}E_{\frac{1}{2},\frac 12}(\lambda t)=\text{(from \eqref{eq:rel})}
=\frac{ t}{E_{\frac{1}{2},1}(\lambda t)}\frac{d}{dt}E_{\frac{1}{2},1}(\lambda t)=t\frac{d}{dt}\log E_{\frac{1}{2},1}(\lambda t)
\end{eqnarray*}
while for $d=3$, we get that
\begin{equation}\label{mean3}
E\left\{\mathcal{N}_3(t)\right\}=\lambda t-\frac{\lambda t}{2} \frac{ E_{1,\frac{5}{2}}(\lambda t)}{ E_{1,\frac{3}{2}}(\lambda t)} .
\end{equation}
Result \eqref{mean3} shows that the fractional Poisson process with distribution \eqref{lawgenpoi} has a mean number of events growing more slowly than the classical Poisson one.

Analogously, for $\underline{\bf Y}_d(t),t>0,$ we represent the random number of deviations by means of the process $\mathcal{M}^d(t),t>0,$ having probability distribution
\begin{equation}\label{lawgenpoi2}
P\left\{\mathcal{M}_d(t)=n\right\}=\frac{1}{E_{\frac{d}{2}-1,\frac d2}(\lambda t)}\frac{(\lambda t)^n}{\Gamma((\frac{d}{2}-1)n+\frac d2)},\quad d\geq 3,\,n=0,1,...
\end{equation}
The generating function of the probabilities in this case reads
$$G_{\mathcal{M}_d}(u,t)=\frac{E_{\frac{d}{2}-1,\frac d2}(\lambda tu)}{E_{\frac{d}{2}-1,\frac d2}(\lambda t)},\quad |u|\leq 1.$$
From relationship \eqref{eq:rel} emerges that
\begin{eqnarray}
E\left\{\mathcal{M}_d(t)\right\}&=&\frac{d}{du}G_{\mathcal{M}_d}(u,t)|_{u=1}\notag\\
&=&\frac{2\lambda t}{(d-2)E_{\frac{d}{2}-1,\frac d2}(\lambda t)}\left[E_{\frac{d}{2}-1,d-2}(\lambda t)+\left(1-\frac d2\right)E_{\frac{d}{2}-1,d-1}(\lambda t)\right]
\end{eqnarray}
which for $d=4$ becomes
\begin{equation}\label{mean4}
E\left\{\mathcal{M}_4(t)\right\}=\lambda t\left(1- \frac{ E_{1,3}(\lambda t)}{ E_{1,2}(\lambda t)}\right) .
\end{equation}

For $\mathcal{N}_d(t)=0$ the particle reaches the surface of the hypersphere with probability
\begin{equation}
P\{\underline{\bf X}_{d}(t)\in\partial \mathcal{H}_{ct}^d\}=\frac{1}{E_{\frac{d-1}{2},\frac d2}(\lambda t)}\frac{1}{\Gamma(\frac d2)}
\end{equation}
while if $\mathcal{M}_d(t)=0$, one has that
\begin{equation}
P\{\underline{\bf Y}_{d}(t)\in\partial \mathcal{H}_{ct}^d\}=\frac{1}{E_{\frac{d}{2}-1,\frac d2}(\lambda t)}\frac{1}{\Gamma(\frac d2)}.
\end{equation}

We assume that the processes $\mathcal{N}_d(t)$ and $\mathcal{M}_d(t),t>0,$ are independent from the Dirichlet distributed displacements and the angle orientations. Now, we provide the main results of this Section.

\begin{theorem}\label{teounc}
If the number of deviations is represented by a fractional Poisson process with distribution \eqref{lawgenpoi} then the absolutely continuous component of the probability distributions of  $\underline{\bf X}_{d}(t),t>0,$ is equal to
\begin{equation}\label{unclaw1}
\frac{P\{\underline{\bf X}_{d}(t)\in d\underline{\bf x}_{d}\}}{\prod_{j=1}^ddx_j}=\frac{\lambda t}{\pi^{\frac d2}}\frac{(c^2t^2-||\underline{\bf x}_{d}||^2)^{\frac{d-1}{2}-1}}{(ct)^{2(d-1)-1}}\frac{E_{\frac {d-1}{2},\frac {d-1}{2}}\left(\frac{\lambda t(c^2t^2-||\underline{\bf x}_{d}||^2)^{\frac {d-1}{2}}}{(ct)^{(d-1)}}\right)}{E_{\frac{d-1}{2},\frac d2}(\lambda t)}
\end{equation}
where $d\geq 2$, $||\underline{\bf x}_{d}||<ct$, while if the number of the deviations is given by \eqref{lawgenpoi2} , the distribution of $\underline{\bf Y}_{d}(t),t>0,$ reads
\begin{equation}\label{unclaw2}
\frac{P\{\underline{\bf Y}_{d}(t)\in d\underline{\bf y}_{d}\}}{\prod_{j=1}^ddy_j}=\frac{\lambda t}{\pi^{\frac d2}}\frac{(c^2t^2-||\underline{\bf y}_{d}||^2)^{\frac{d}{2}-2}}{(ct)^{4(\frac d2-1)}}\frac{E_{\frac {d}{2}-1,\frac {d}{2}-1}\left(\frac{\lambda t(c^2t^2-||\underline{\bf y}_{d}||^2)^{\frac {d}{2}-1}}{(ct)^{(d-2)}}\right)}{E_{\frac{d}{2}-1,\frac d2}(\lambda t)}
\end{equation}
where $d\geq 3$, $||\underline{\bf y}_{d}||<ct$.
\end{theorem}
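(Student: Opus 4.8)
The plan is to obtain the unconditional laws from the total probability formula, conditioning on the value of the randomizing fractional Poisson process and averaging the conditional densities of Theorem \ref{th2} against the probabilities \eqref{lawgenpoi} (respectively \eqref{lawgenpoi2}). For $\mathcal{N}_d(t)=0$ the conditional law is the uniform measure on $\partial \mathcal{H}_{ct}^d$; this produces a singular component of total mass $\frac{1}{\Gamma(d/2)\,E_{(d-1)/2,d/2}(\lambda t)}$, which I would set aside, so that the absolutely continuous component is exactly
$$\frac{P\{\underline{\bf X}_d(t)\in d\underline{\bf x}_d\}}{\prod_{j=1}^d dx_j}=\sum_{n=1}^\infty p_{\underline{\bf X}_d}(\underline{\bf x}_d,t;n)\,P\{\mathcal{N}_d(t)=n\},\qquad ||\underline{\bf x}_d||<ct,$$
into which I substitute \eqref{condlaw} and \eqref{lawgenpoi}. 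Since all summands are nonnegative, the interchange of the series with the reference measure is automatic.

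The computation then hinges on the elementary identity $\frac{n+1}{2}(d-1)+\frac12=\frac{d-1}{2}n+\frac d2$, which makes the Gamma factor in the numerator of \eqref{condlaw} cancel precisely against the Gamma factor in the denominator of \eqref{lawgenpoi}. After this cancellation I would factor out the $n$-independent quantity $\frac{1}{\pi^{d/2}(ct)^{d-2}(c^2t^2-||\underline{\bf x}_d||^2)}$ and collect the residual $n$-dependence into
$$\sum_{n=1}^\infty\frac{1}{\Gamma(\frac{d-1}{2}n)}\,w^n,\qquad w=\frac{\lambda t\,(c^2t^2-||\underline{\bf x}_d||^2)^{\frac{d-1}{2}}}{(ct)^{d-1}};$$
shifting the summation index $n\mapsto n+1$ and pulling out one factor of $w$ identifies this series as $w\,E_{\frac{d-1}{2},\frac{d-1}{2}}(w)$. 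Reassembling the prefactors and using $(ct)^{d-2}(ct)^{d-1}=(ct)^{2(d-1)-1}$ then yields \eqref{unclaw1}.

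For $\underline{\bf Y}_d(t)$ the argument is verbatim the same, starting from \eqref{condlaw2} and \eqref{lawgenpoi2}: the relevant cancellation is now $(n+1)(\frac d2-1)+1=(\frac d2-1)n+\frac d2$, the residual series becomes $v\,E_{\frac d2-1,\frac d2-1}(v)$ with $v=\lambda t\,(c^2t^2-||\underline{\bf y}_d||^2)^{\frac d2-1}/(ct)^{d-2}$, and recalling $2(\frac d2-1)=d-2$ one recovers \eqref{unclaw2}. I do not expect a genuine obstacle here; the only delicate points are the bookkeeping of the powers of $ct$ after the index shift and, if one wants a self-contained consistency check, verifying that the absolutely continuous component together with the atom on $\partial \mathcal{H}_{ct}^d$ has total mass one, which follows at once from the Beta-integral evaluation already carried out in the Remarks after Theorem \ref{th2}.
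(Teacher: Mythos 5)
Your proposal is correct and follows essentially the same route as the paper: condition on $\mathcal{N}_d(t)=n$ (respectively $\mathcal{M}_d(t)=n$), note that the Gamma factor $\Gamma(\tfrac{n+1}{2}(d-1)+\tfrac12)=\Gamma(\tfrac{d-1}{2}n+\tfrac d2)$ cancels against the denominator of \eqref{lawgenpoi}, shift the summation index, and recognize the residual series as a two-parameter Mittag-Leffler function. The bookkeeping of the powers of $ct$ and the treatment of the $n=0$ atom on $\partial\mathcal{H}_{ct}^d$ are exactly as in the paper.
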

\begin{proof} For the random flight $\underline{\bf X}_{d}(t),t>0$, we have that
\begin{align*}
\frac{P\{\underline{\bf X}_{d}(t)\in d\underline{\bf x}_{d}\}}{\prod_{j=1}^ddx_j}
&=\sum_{n=1}^\infty p_{\underline{\bf X}_d}(\underline{\bf x}_d,t;n) \,P\left\{\mathcal{N}_d(t)=n\right\}\notag\\
&=\frac{1}{\pi^{\frac d2}}\frac{1}{E_{\frac{d-1}{2},\frac d2}(\lambda t)}\sum_{n=1}^\infty\frac{(\lambda t)^n}{\Gamma(\frac n2(d-1))}\frac{(c^2t^2-||\underline{\bf x}_{d}||^2)^{\frac n2(d-1)-1}}{(ct)^{(n+1)(d-1)-1}}\notag\\
&=\frac{1}{\pi^{\frac d2}}\frac{1}{E_{\frac{d-1}{2},\frac d2}(\lambda t)}\sum_{n=0}^\infty\frac{(\lambda t)^{n+1}}{\Gamma(\frac{ n+1}{2}(d-1))}\frac{(c^2t^2-||\underline{\bf x}_{d}||^2)^{\frac {n+1}{2}(d-1)-1}}{(ct)^{(n+2)(d-1)-1}}\notag\\
&=\frac{1}{\pi^{\frac d2}}\frac{\lambda t}{E_{\frac{d-1}{2},\frac d2}(\lambda t)}\frac{(c^2t^2-||\underline{\bf x}_{d}||^2)^{\frac{d-1}{2}-1}}{(ct)^{2(d-1)-1}}\sum_{n=0}^\infty\frac{1}{\Gamma(\frac{ n+1}{2}(d-1))}\left[\frac{\lambda t(c^2t^2-||\underline{\bf x}_{d}||^2)^{\frac {d-1}{2}}}{(ct)^{(d-1)}}\right]^n\notag\\
&=\frac{\lambda t}{\pi^{\frac d2}}\frac{(c^2t^2-||\underline{\bf x}_{d}||^2)^{\frac{d-1}{2}-1}}{(ct)^{2(d-1)-1}}\frac{E_{\frac {d-1}{2},\frac {d-1}{2}}\left(\frac{\lambda t(c^2t^2-||\underline{\bf x}_{d}||^2)^{\frac {d-1}{2}}}{(ct)^{(d-1)}}\right)}{E_{\frac{d-1}{2},\frac d2}(\lambda t)}\notag,
\end{align*}

Similarly, for the random flight $\underline{\bf Y}_{d}(t),t>0,$ we are able to derive the density \eqref{unclaw2} as follows
\begin{align*}
\frac{P\{\underline{\bf Y}_{d}(t)\in d\underline{\bf y}_{d}\}}{\prod_{j=1}^ddy_j}
&=\sum_{n=1}^\infty p_{\underline{\bf Y}_d}(\underline{\bf y}_d,t;n)\,P\left\{\mathcal{M}_d(t)=n\right\}\notag\\
&=\frac{1}{\pi^{\frac d2}}\frac{1}{E_{\frac{d}{2}-1,\frac d2}(\lambda t)}\sum_{n=1}^\infty\frac{(\lambda t)^n}{\Gamma(n(\frac d2-1))}\frac{(c^2t^2-||\underline{\bf y}_{d}||^2)^{n(\frac d2-1)-1}}{(ct)^{2(n+1)(\frac d2-1)}}\notag\\
&=\frac{1}{\pi^{\frac d2}}\frac{1}{E_{\frac{d}{2}-1,\frac d2}(\lambda t)}\sum_{n=0}^\infty\frac{(\lambda t)^{n+1}}{\Gamma((n+1)(\frac d2-1))}\frac{(c^2t^2-||\underline{\bf y}_{d}||^2)^{(n+1)(\frac d2-1)-1}}{(ct)^{2(n+2)(\frac d2-1)}}\notag\\
&=\frac{1}{\pi^{\frac d2}}\frac{\lambda t}{E_{\frac{d}{2}-1,\frac d2}(\lambda t)}\frac{(c^2t^2-||\underline{\bf y}_{d}||^2)^{\frac{d}{2}-2}}{(ct)^{4(\frac d2-1)}}\sum_{n=0}^\infty\frac{1}{\Gamma((n+1)(\frac d2-1))}\left[\frac{\lambda t(c^2t^2-||\underline{\bf y}_{d}||^2)^{\frac {d}{2}-1}}{(ct)^{(d-2)}}\right]^n\notag\\
&=\frac{\lambda t}{\pi^{\frac d2}}\frac{(c^2t^2-||\underline{\bf y}_{d}||^2)^{\frac{d}{2}-2}}{(ct)^{4(\frac d2-1)}}\frac{E_{\frac {d}{2}-1,\frac {d}{2}-1}\left(\frac{\lambda t(c^2t^2-||\underline{\bf y}_{d}||^2)^{\frac {d}{2}-1}}{(ct)^{(d-2)}}\right)}{E_{\frac{d}{2}-1,\frac d2}(\lambda t)}\notag.
\end{align*}

\end{proof}

\begin{remark}
It is not hard to verify that
$$\int_{\mathcal{H}_{ct}^d}P\{\underline{\bf X}_{d}(t)\in d\underline{\bf x}_{d}\}=1-P\{\underline{\bf X}_{d}(t)\in \partial\mathcal{H}_{ct}^d\}=1-\frac{1}{E_{\frac{d-1}{2},\frac d2}(\lambda t)}\frac{1}{\Gamma(\frac d2)}
$$
and
$$\int_{\mathcal{H}_{ct}^d}P\{\underline{\bf Y}_{d}(t)\in d\underline{\bf y}_{d}\}=1-P\{\underline{\bf Y}_{d}(t)\in \partial\mathcal{H}_{ct}^d\}=1-\frac{1}{E_{\frac{d}{2}-1,\frac d2}(\lambda t)}\frac{1}{\Gamma(\frac d2)}.
$$
\end{remark}

\begin{remark}
We examine some particular cases where the densities of Theorem \ref{teounc} take interesting forms. Indeed, by observing that $E_{1,1}(x)=e^x$ and $E_{1,2}(x)=\frac {e^x-1}{x}$, we have the distributions summarized in the following Table.

 \begin{table}[h]
\begin{center}
\begin{tabular}{c|cc}
&$d=2$& $d=3$\\\hline\\	
$\underline{\bf X}_d(t)$&$\frac{\lambda}{\pi cE_{\frac12,1}(\lambda t)}\frac{E_{\frac12,\frac12}\left(\frac{\lambda}{c}\sqrt{c^2t^2-||\underline{\bf x}_{2}||^2}\right)}{\sqrt{c^2t^2-||\underline{\bf x}_{2}||^2}}$&$\frac{\lambda}{\pi^{\frac32}c^3t^2}\frac{e^{\frac{\lambda}{c^2t}(c^2t^2-||\underline{\bf x}_{3}||^2)}}{E_{1,\frac32}(\lambda t)}$\\\\
&$d=3$&$d=4$\\\hline\\
 $\underline{\bf Y}_d(t)$&$\frac{\lambda }{\pi^{\frac 32}c^2t}\frac{1}{E_{\frac{1}{2},\frac 32}(\lambda t)}\frac{E_{\frac {1}{2},\frac {1}{2}}\left(\frac{\lambda}{c} \sqrt{c^2t^2-||\underline{\bf y}_{3}||^2}\right)}{\sqrt{c^2t^2-||\underline{\bf y}_{3}||^2}}$&$\frac{\lambda^2}{\pi^{2}c^4t^2}\frac{e^{\frac{\lambda}{c^2t}(c^2t^2-||\underline{\bf y}_{4}||^2)}}{e^{\lambda t}-1}$\\
\end{tabular}
\end{center}
\end{table}

All the distributions \eqref{unclaw1}, \eqref{unclaw2} and the special cases in the above Table have the isotropic form \eqref{eq:introd2}.

If we suppose that the changes of direction are governed by an homogeneous Poisson process the intervals $\tau_1,...,\tau_n,$ are uniformly distributed on $[0,t]$. In this case the absolutely continuous component of the unconditional distribution of a planar random flight $\underline{\bf X}_2(t),t>0,$ is given by (see Stadje, 1987)
\begin{equation}\label{stadje}
\overline{p}_{\underline{\bf X}_2}(\underline{\bf x}_2,t)=\frac{\lambda e^{-\lambda t}}{2\pi c}\frac{e^{\frac{\lambda}{c}\sqrt{c^2t^2-||\underline{\bf x}_2||^2}}}{\sqrt{c^2t^2-||\underline{\bf x}_2||^2}},\quad ||\underline{\bf x}_2||<ct.
\end{equation}
Therefore, by comparing \eqref{stadje} with the corresponding distribution in the above Table, we observe that in our context, $e^{\lambda t}/2$ and $e^{\frac{\lambda}{c}\sqrt{c^2t^2-||\underline{\bf x}_2||^2}}$ are replaced by $E_{\frac12,1}(\lambda t)$ and $E_{\frac12,\frac12}\left(\frac{\lambda}{c}\sqrt{c^2t^2-||\underline{\bf x}_{2}||^2}\right)$, respectively.

Furthermore, the absolutely continuous part of the distribution of a four-dimensional random flight with Poissonian switching times  (see formula (3.7) in Orsingher and De Gregorio, 2007) is equal to
\begin{equation*}
\overline p_{\underline{\bf Y}_4}(\underline{\bf y}_4,t)=\frac{\lambda}{c^4t^3\pi^2}e^{-\frac{\lambda}{c^2t}||\underline{\bf y}_{4}||^2}\left\{2+\frac{\lambda}{c^2t}(c^2t^2-||\underline{\bf y}_{4}||^2)\right\}
\end{equation*}
which has to be compared with
\begin{equation*}
 p_{\underline{\bf Y}_4}(\underline{\bf y}_4,t)=\frac{\lambda^2}{\pi^{2}c^4t^2}\frac{e^{\frac{\lambda}{c^2t}(c^2t^2-||\underline{\bf y}_{4}||^2)}}{e^{\lambda t}-1}.
\end{equation*}
\end{remark}

We indicate with $\underline{\bf X}_{m}^d(t)=(X_1(t),...,X_m(t)),t>0,$ and with $\underline{\bf Y}_{m}^d(t)=(Y_1(t),...,Y_m(t)),t>0,$ the random processes emerging from the projection onto $\mathbb{R}^m$ of $\underline{\bf X}_d(t)$ and $\underline{\bf Y}_{d}(t)$, respectively. In the next Theorem we give the unconditional distribution of  $\underline{\bf X}_{m}^d(t),t>0,$ and  $\underline{\bf Y}_{m}^d(t),t>0,$ $1\leq m<d$. The singular component of the distributions of  $\underline{\bf X}_d(t)$ and $\underline{\bf Y}_{d}(t),t>0$, are projected on the subspaces $\mathbb{R}^m,1\leq m<d,$ and enter into the absolutely continuous part of $P\{\underline{\bf X}_{m}^d(t)\in d\underline{\bf x}_{m}\}$ and $P\{\underline{\bf Y}_{m}^d(t)\in d\underline{\bf y}_{m}\}$.
\begin{theorem}
For the random flights $\underline{\bf X}_{m}^d(t),t>0,$ and $\underline{\bf Y}_{m}^d(t),t>0,$ we have the following unconditional distributions
\begin{equation}\label{uncmarg}
\frac{P\{\underline{\bf X}_{m}^d(t)\in d\underline{\bf x}_{m}\}}{\prod_{j=1}^mdx_j}=\frac{(c^2t^2-||\underline{\bf x}_{m}||^2)^{\frac{d-m}{2}-1}}{\pi^{\frac m2}(ct)^{d-2}}\frac{E_{\frac {d-1}{2},\frac {d-m}{2}}\left(\frac{\lambda t(c^2t^2-||\underline{\bf x}_{m}||^2)^{\frac {d-1}{2}}}{(ct)^{(d-1)}}\right)}{E_{\frac{d-1}{2},\frac d2}(\lambda t)}
\end{equation}
with $d\geq 2,\,||\underline{\bf x}_{m}||<ct$ and
\begin{equation}\label{uncmarg2}
\frac{P\{\underline{\bf Y}_{m}^d(t)\in d\underline{\bf y}_{m}\}}{\prod_{j=1}^mdy_j}=\frac{(c^2t^2-||\underline{\bf y}_{m}||^2)^{\frac{d-m}{2}-1}}{\pi^{\frac m2}(ct)^{d-2}}\frac{E_{\frac {d}{2}-1,\frac {d-m}{2}}\left(\frac{\lambda t(c^2t^2-||\underline{\bf y}_{m}||^2)^{\frac {d}{2}-1}}{(ct)^{(d-2)}}\right)}{E_{\frac{d}{2}-1,\frac d2}(\lambda t)}
\end{equation}
with $d\geq 3,\,||\underline{\bf y}_{m}||<ct$.
\end{theorem}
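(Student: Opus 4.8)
The plan is to argue exactly as in the proof of Theorem~\ref{teounc}, but to condition on $\mathcal{N}_d(t)$ (resp.\ $\mathcal{M}_d(t)$) and average the conditional \emph{marginal} laws \eqref{eq:marg} and \eqref{eq:marg2} rather than the full-dimensional densities. First I would isolate the atom at $n=0$: since $\mathcal{N}_d(t)=0$ places $\underline{\bf X}_d(t)$ on the sphere $\partial\mathcal{H}_{ct}^d$, its projection onto $\mathbb{R}^m$ is absolutely continuous because $m<d$, and must be computed separately. By the same iterated Beta-integral reduction used to establish \eqref{eq:marg}, the projection onto $\mathbb{R}^m$ of the uniform law on $\partial\mathcal{H}_{ct}^d$ has density
\[
g_m^d(\underline{\bf x}_m,t)=\frac{\Gamma(\tfrac d2)}{\pi^{m/2}\,\Gamma(\tfrac{d-m}{2})}\,\frac{(c^2t^2-||\underline{\bf x}_m||^2)^{\frac{d-m}{2}-1}}{(ct)^{d-2}},
\]
so that
\[
\frac{P\{\underline{\bf X}_{m}^d(t)\in d\underline{\bf x}_{m}\}}{\prod_{j=1}^m dx_j}
=g_m^d(\underline{\bf x}_m,t)\,P\{\mathcal{N}_d(t)=0\}+\sum_{n\ge1} f_{\underline{\bf X}_m}^d(\underline{\bf x}_m,t;n)\,P\{\mathcal{N}_d(t)=n\}.
\]

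The next step is to insert \eqref{eq:marg} and \eqref{lawgenpoi} into the sum and exploit the identity $\Gamma\!\big(\tfrac{n+1}{2}(d-1)+\tfrac12\big)=\Gamma\!\big(\tfrac n2(d-1)+\tfrac d2\big)$, which makes the numerator Gamma of \eqref{eq:marg} cancel the Mittag--Leffler normalizer $\Gamma\!\big(\tfrac{d-1}{2}n+\tfrac d2\big)$ in the fractional Poisson weight. After collecting the powers of $ct$ and of $c^2t^2-||\underline{\bf x}_m||^2$, the $n$-th term equals the common factor $\pi^{-m/2}(ct)^{-(d-2)}(c^2t^2-||\underline{\bf x}_m||^2)^{\frac{d-m}{2}-1}/E_{\frac{d-1}{2},\frac d2}(\lambda t)$ times
\[
\frac{1}{\Gamma\!\big(\tfrac{d-1}{2}n+\tfrac{d-m}{2}\big)}\left(\frac{\lambda t\,(c^2t^2-||\underline{\bf x}_m||^2)^{\frac{d-1}{2}}}{(ct)^{d-1}}\right)^{\!n}.
\]
The crucial observation is that the $n=0$ boundary contribution $g_m^d\,P\{\mathcal{N}_d(t)=0\}$ is \emph{precisely} the $n=0$ summand of this series, because $P\{\mathcal{N}_d(t)=0\}=1/\big(\Gamma(\tfrac d2)E_{\frac{d-1}{2},\frac d2}(\lambda t)\big)$ cancels the $\Gamma(\tfrac d2)$ in $g_m^d$. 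Hence the atom and the $n\ge1$ series merge into the single function $E_{\frac{d-1}{2},\frac{d-m}{2}}$ evaluated at $\lambda t(c^2t^2-||\underline{\bf x}_m||^2)^{(d-1)/2}/(ct)^{d-1}$, which is \eqref{uncmarg}.

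For $\underline{\bf Y}_m^d(t)$ I would repeat the argument verbatim with \eqref{eq:marg2} and \eqref{lawgenpoi2}, now invoking $\Gamma\!\big((n+1)(\tfrac d2-1)+1\big)=\Gamma\!\big((\tfrac d2-1)n+\tfrac d2\big)$ to cancel the normalizer of $\mathcal{M}_d(t)$, reorganizing the powers, and again recognizing the $n=0$ sphere-projection term as the zeroth summand, so that everything assembles into $E_{\frac d2-1,\frac{d-m}{2}}$ and gives \eqref{uncmarg2}. Equivalently, one may integrate the unconditional densities \eqref{unclaw1}--\eqref{unclaw2} over the last $d-m$ coordinates and add the projection of their singular components on $\partial\mathcal{H}_{ct}^d$; this yields the same computation. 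The main obstacle is exactly this $n=0$ term: one must check that projecting the uniform law on $\partial\mathcal{H}_{ct}^d$ onto $\mathbb{R}^m$ produces precisely $g_m^d$ above — both the exponent $\tfrac{d-m}{2}-1$ and the constant $\Gamma(\tfrac d2)/\big(\pi^{m/2}\Gamma(\tfrac{d-m}{2})(ct)^{d-2}\big)$ — so that, after weighting by $P\{\mathcal{N}_d(t)=0\}$ (resp.\ $P\{\mathcal{M}_d(t)=0\}$), it dovetails seamlessly as the zeroth term of the Mittag--Leffler series; the remaining exponent bookkeeping and duplication-type Gamma identities are routine.
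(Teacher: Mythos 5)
Your proposal is correct and follows essentially the same route as the paper: average the conditional marginals \eqref{eq:marg}--\eqref{eq:marg2} over the fractional Poisson weights \eqref{lawgenpoi} and \eqref{lawgenpoi2}, after observing that the projection of the uniform law on $\partial\mathcal{H}_{ct}^d$ coincides with the $n=0$ case of \eqref{eq:marg}, so the whole sum over $n\geq 0$ assembles into the Mittag--Leffler functions $E_{\frac{d-1}{2},\frac{d-m}{2}}$ and $E_{\frac d2-1,\frac{d-m}{2}}$. The Gamma cancellations you invoke, $\Gamma\big(\tfrac{n+1}{2}(d-1)+\tfrac12\big)=\Gamma\big(\tfrac{d-1}{2}n+\tfrac d2\big)$ and $\Gamma\big((n+1)(\tfrac d2-1)+1\big)=\Gamma\big((\tfrac d2-1)n+\tfrac d2\big)$, are exactly what makes the paper's computation work.
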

\begin{proof}
We observe that the projection of the uniform distribution on the surface of $\mathcal{H}_{ct}^d$ onto $\mathbb{R}^{d-1}$ is obtained by means of the relationship $d\mathcal{H}_{ct}^d\sin\theta=\prod_{j=1}^{d-1}dx_j$, where $\sin\theta=\frac{\sqrt{c^2t^2-||\underline{\bf x}_{d-1}||^2}}{ct}$. Then, we obtain that
$$f_{\underline{\bf X}_{d-1}}^d(\underline{\bf x}_{d-1},t;0)
=\frac{\Gamma(\frac d2)}{(ct)^{d-1}\pi^{\frac d2}}\frac{\prod_{j=1}^{d-1}dx_j}{\sqrt{c^2t^2-||\underline{\bf x}_{d-1}||^2}}
$$
and by performing the following integrations
\begin{align*}
\int_{-\sqrt{c^2t^2-||\underline{\bf x}_{m}||^2}}^{\sqrt{c^2t^2-||\underline{\bf x}_{m}||^2}}dx_{m+1}\cdots\int_{\sqrt{c^2t^2-||\underline{\bf x}_{d-2}||^2}}^{\sqrt{c^2t^2-||\underline{\bf x}_{d-2}||^2}}f_{\underline{\bf X}_{d-1}}^d(\underline{\bf x}_{d-1},t;0)dx_{d-1}=\frac{\Gamma(\frac d2)}{\pi^{\frac m2}}\frac{(c^2t^2-||\underline{\bf x}_{m}||^2)^{\frac{d-m}{2}-1}}{(ct)^{d-2}\Gamma(\frac{d-m}{2})},
\end{align*}
with $ ||\underline{\bf x}_{m}||<ct$,
which corresponds to \eqref{eq:marg} and \eqref{eq:marg2} for $n=0$. Then, for the process $\underline{\bf X}_{m}^d(t)$ we obtain that
\begin{align*}
\frac{P\{\underline{\bf X}_{m}^d(t)\in d\underline{\bf x}_{m}\}}{\prod_{j=1}^mdx_j}
&=\sum_{n=0}^\infty f_{\underline{\bf X}_{m}}^d(\underline{\bf x}_{m},t,n)\,P\left\{\mathcal{N}_d(t)=n\right\}\notag\\
&=\frac{1}{\pi^{\frac m2}}\frac{1}{E_{\frac{d-1}{2},\frac d2}(\lambda t)}\sum_{n=0}^\infty\frac{(\lambda t)^n}{\Gamma(\frac {n+1}{2}(d-1)+\frac{1-m}{2})}\frac{(c^2t^2-||\underline{\bf x}_{m}||^2)^{\frac {n+1}{2}(d-1)-\frac{m+1}{2}}}{(ct)^{(n+1)(d-1)-1}}\notag\\
&=\frac{1}{\pi^{\frac m2}}\frac{(c^2t^2-||\underline{\bf x}_{m}||^2)^{\frac{d-m}{2}-1}}{(ct)^{d-2}}\frac{E_{\frac {d-1}{2},\frac {d-m}{2}}\left(\frac{\lambda t(c^2t^2-||\underline{\bf x}_{m}||^2)^{\frac {d-1}{2}}}{(ct)^{(d-1)}}\right)}{E_{\frac{d-1}{2},\frac d2}(\lambda t)}.
\end{align*}
with $||\underline{\bf x}_{m}||<ct$.

Analogous considerations on the random flight $\underline{\bf Y}_{m}^d(t),t>0,$ yield result \eqref{uncmarg2}.
\end{proof}

In Table \ref{uncproj}-\ref{uncproj2}, we sum up some particular important cases of the distributions \eqref{uncmarg} and \eqref{uncmarg2}.
 \begin{table}[h]
\begin{center}
\begin{tabular}{c|c|c}
&$d=2$&$d=3$\\		
\hline&&\\
$m=2$&-&$\frac{(c^2t^2-||\underline{\bf x}_{2}||^2)^{-\frac{1}{2}}}{\pi ct}\frac{E_{1,\frac {1}{2}}\left(\frac{\lambda t(c^2t^2-||\underline{\bf x}_{2}||^2)}{(ct)^2}\right)}{E_{1,\frac 32}(\lambda t)}$\\
$m=1$&$\frac{(c^2t^2-x_1^2)^{-\frac{1}{2}}}{\pi^{\frac 12}}\frac{E_{\frac{1}{2},\frac {1}{2}}\left(\frac{\lambda (c^2t^2-x_1^2)^{\frac{1}{2}}}{c}\right)}{E_{\frac{1}{2},1}(\lambda t)}$&$\frac{1}{\pi^{\frac 12}ct}\frac{e^{\frac{\lambda}{c^2t}(c^2t^2-x_1^2)}}{E_{1,\frac{3}{2}}(\lambda t)}$
\end{tabular}
\end{center}
\caption{Unconditional densities of $\underline{\bf X}_{m}^2(t),t>0,$ and $\underline{\bf X}_{m}^3(t),t>0$.}\label{uncproj}
\end{table}%

 \begin{table}[h]
\begin{center}
\begin{tabular}{c|c|c}
&$d=3$&$d=4$\\		
\hline&&\\
$m=3$&-&$\frac{\lambda t(c^2t^2-||\underline{\bf y}_{3}||^2)^{-\frac{1}{2}}}{(\pi)^{\frac32} (ct)^2}\frac{E_{1,\frac {1}{2}}\left(\frac{\lambda t(c^2t^2-||\underline{\bf y}_{3}||^2)}{(ct)^2}\right)}{e^{\lambda t}-1}$\\&&\\
$m=2$&
$\frac{(c^2t^2-||\underline{\bf y}_{2}||^2)^{-\frac{1}{2}}}{\pi ct}\frac{E_{\frac12,\frac {1}{2}}\left(\frac{\lambda t(c^2t^2-||\underline{\bf y}_2||^2)^{\frac12}}{ct}\right)}{E_{\frac12,\frac32}(\lambda t)}$&$\frac{\lambda t}{\pi (ct)^2}\frac{e^{\frac{\lambda}{c^2t}(c^2t^2-||\underline{\bf y}_{2}||^2)}}{e^{\lambda t}-1}$\\&&\\
$m=1$&$\frac{1}{\pi^{\frac 12}ct}\frac{E_{\frac{1}{2},1}\left(\frac{\lambda (c^2t^2-y_1^2)^{\frac{1}{2}}}{c}\right)}{E_{\frac{1}{2},\frac32}(\lambda t)}$&$\frac{\lambda t(c^2t^2-y_1^2)^{\frac{1}{2}}}{(\pi)^{\frac12} (ct)^2}\frac{E_{1,\frac {3}{2}}\left(\frac{\lambda t(c^2t^2-y_1^2)}{(ct)^2}\right)}{e^{\lambda t}-1}$
\end{tabular}
\end{center}
\caption{Unconditional densities of $\underline{\bf Y}_{m}^3(t),t>0,$ and $\underline{\bf Y}_{m}^4(t),t>0$.}\label{uncproj2}
\end{table}%
\begin{remark}
The projection onto the one-dimensional space $\mathbb{R}^1$ of \eqref{stadje} and of the singular component of a planar random flight with Poissonian times, becomes
\begin{equation}\label{struve}
\begin{split}
\overline p_{X_1}^2(x_1,t)
&=\frac{\lambda e^{-\lambda t}}{2c}\sum_{k=0}^\infty\left(\frac{\lambda}{2c}\sqrt{c^2t^2-x_1^2}\right)^{k-1}\frac{1}{\Gamma^2(\frac{k+1}{2})}\\
&=\frac{e^{-\lambda t}}{\pi\sqrt{c^2t^2-x_1^2}}+\frac{\lambda e^{-\lambda t}}{2c}\sum_{k=0}^\infty\left(\frac{\lambda}{2c}\sqrt{c^2t^2-x_1^2}\right)^k\frac{1}{\Gamma^2(\frac{k}{2}+1)}\\
&=\frac{e^{-\lambda t}}{\pi\sqrt{c^2t^2-x_1^2}}+\frac{\lambda e^{-\lambda t}}{2c}\left\{I_0\left(\frac{\lambda}{c}\sqrt{c^2t^2-x_1^2}\right)+{\bf L}_0\left(\frac{\lambda}{c}\sqrt{c^2t^2-x_1^2}\right)\right\},
\end{split}
\end{equation}
where $|x_1|\leq ct$,
and $I_0(x)=\sum_{k=0}^\infty\frac{(x/2)^2k}{(k!)^2}, x\in \mathbb{R},$ is the modified Bessel function, while ${\bf L}_0(x)=\sum_{k=0}^\infty\frac{( x/2)^{2k+1}}{(\Gamma(k+\frac32))^2}, $ $x\in \mathbb{R},$ is the modified Struve function. It is particularly interesting to compare \eqref{struve} with the probability distribution obtained in Table \ref{uncproj} for $d=2$ and $m=1$, namely
\begin{equation}\label{projecR}
\frac{P\{X_1^2(t)\in d x_1\}}{dx_1}=\frac{1}{\sqrt\pi\sqrt{c^2t^2-x_1^2}}\frac{E_{\frac{1}{2},\frac {1}{2}}\left(\frac{\lambda}{c} \sqrt{c^2t^2-x_1^2}\right)}{E_{\frac{1}{2},1}(\lambda t)}
=\frac{\lambda}{\sqrt{\pi}cE_{\frac{1}{2},1}(\lambda t)}\sum_{k=0}^\infty\left(\frac{\lambda}{c}\sqrt{c^2t^2-x_1^2}\right)^{k-1}\frac{1}{\Gamma(\frac{k+1}{2})}.
\end{equation}
with $ |x_1|\leq ct$.

\end{remark}
\begin{remark}
We observe that the $k$-th term, for $k\geq 2$, of \eqref{projecR} can be extracted from the uniform distribution inside the hypersphere $\mathcal{H}_{ct}^k=\{x_1,...,x_k:||\underline{\bf x}_k||\leq ct\}$ as follows
\begin{align*}
g_k(x_1)&=\frac{\Gamma(\frac k2)}{2\pi^{\frac k2}(ct)^k}\int_{-\sqrt{c^2t^2-x_1^2}}^{\sqrt{c^2t^2-x_1^2}}dx_2\cdots\int_{-\sqrt{c^2t^2-||\underline{\bf x}_{k-2}||^2}}^{\sqrt{c^2t^2-||\underline{\bf x}_{k-2}||^2}}dx_{k-1}\int_{-\sqrt{c^2t^2-||\underline{\bf x}_{k-1}||^2}}^{\sqrt{c^2t^2-||\underline{\bf x}_{k-1}||^2}}dx_{k}\\
&=\frac{\Gamma(\frac k2+1)}{\sqrt{\pi}\Gamma(\frac{k+1}{2})(ct)^k}\left(\sqrt{c^2t^2-x_1^2}\right)^{k-1}.
\end{align*}
The above distribution for $k=2$ yields the well-known Wigner law of which it represents an extension. We observe that
$$\int_{-ct}^{ct}x_1^{2m}g_k(x_1)dx_1=(ct)^{2m+k-1}\frac{\Gamma(m+\frac12)\Gamma(\frac k2+1)}{\sqrt{\pi}\Gamma(m+\frac k2+1)}$$
and for $k=2$ yields
$\int_{-ct}^{ct}x_1^{2m}g_2(x_1)dx_1=\binom{2m}{m}\frac{1}{m+1}\frac{1}{2^{2m+1}}2(ct)^{2m+1}$
involving the Catalan numbers and the distribution of the first return in the origin of the coin tossing process.

By summing up the distribution $g_k(x_1)$ with weighting terms represented by the fractional Poisson process $\mathcal{N}_2(t),t>0,$ with probability distribution
$$P\{\mathcal{N}_2(t)=k\}=\frac{(\lambda t)^k}{\Gamma(\frac k2+1)}\frac{1}{E_{\frac12,1}(\lambda t)},\quad k\geq 0,$$
we obtain the probability law \eqref{projecR}. By integrating the uniform law in the hypersphere $\mathcal{H}_{ct}^k$ with respect to the variables $x_{m+1},...,x_k,$ we obtain an $m$-dimensional extension of $g_k(x_1)$ in the following manner
\begin{align}\label{kterm2}
g_k(x_1,...,x_m)&=\frac{\Gamma(\frac k2)}{2\pi^{\frac k2}(ct)^k}\int_{-\sqrt{c^2t^2-||\underline{\bf x}_{m}||^2}}^{\sqrt{c^2t^2-||\underline{\bf x}_{m}||^2}}dx_{m+1}\cdots\int_{-\sqrt{c^2t^2-||\underline{\bf x}_{k-2}||^2}}^{\sqrt{c^2t^2-||\underline{\bf x}_{k-2}||^2}}dx_{k-1}\int_{-\sqrt{c^2t^2-||\underline{\bf x}_{k-1}||^2}}^{\sqrt{c^2t^2-||\underline{\bf x}_{k-1}||^2}}dx_{k}\notag\\
&=\frac{\Gamma(\frac k2+1)}{\pi^{\frac m2}\Gamma(\frac{k-m}{2}+1)(ct)^k}\left(\sqrt{c^2t^2-||\underline{\bf x}_m||^2}\right)^{k-m},
\end{align}
with $ k\geq m$.
For $m=k-1$, \eqref{kterm2} represents the $m$-dimensional extension of Wigner law. 
\end{remark}

 \section{On three-dimensional random flights governed by a Poisson process}
 
  The space $\mathbb{R}^3$ is an environment particularly important for the representation of the real motions. Therefore, we will focus here our attention on the random flights developing in the three-dimensional Euclidean space.

For the analysis developed in this Section, it is useful to observe that the Dirichlet distribution 
\eqref{eq:jointdis2} is related to the Poisson process and thus permits us to give an interesting interpretation of the random flight. If $T_1,...,T_n$ are the random instants at which the events of a homogeneous Poisson process occur it is well-known that 
 \begin{equation}\label{uniftime}
 P\{T_1\in dt_1,...,T_n\in dt_n|N(t)=n\}=\frac{n!}{t^n}dt_1\cdots dt_n\, {\bf 1}_{\{0<t_1<t_2<\cdots<t_n<t\}}
 \end{equation}
 By integrating \eqref{uniftime} as follows
\begin{align*}
&\frac{P\{T_{d-1}\in dt_{d-1},T_{2(d-1)}\in dt_{2(d-1)},...,T_{n(d-1)}\in dt_{n(d-1)}|N(t)=(n+1)(d-1)-1\}}{dt_{d-1}dt_{2(d-1)}\cdots dt_{n(d-1)}}\notag\\
&=\frac{((n+1)(d-1)-1)!}{t^{(n+1)(d-1)-1}}\int\limits_{\{0\leq t_1<\cdots<t_{d-1}<\cdots<t_{n(d-1)}<\cdots<t_{(n+1)(d-1)-1}<t\}} dt_1\cdots dt_{n(d-1)+1}\cdots dt_{(n+1)(d-1)-1}\notag\\
&=\frac{((n+1)(d-1)-1)!}{t^{(n+1)(d-1)-1}}\int\limits_{\{0\leq t_1<\cdots <t_{d-2}<t_{d-1}\}}dt_1\cdots dt_{d-2}\int\limits_{\{t_{d-1}< t_{d}<\cdots <t_{2(d-1)-1}<t_{2(d-1)}\}}dt_d\cdots dt_{2(d-1)-1}\notag\\
&\quad\cdots\int\limits_{\{t_{n(d-1)}< t_{n(d-1)+1}<\cdots <t_{(n+1)(d-1)-1}<t\}}dt_{n(d-1)+1}\cdots  dt_{(n+1)(d-1)-1}\notag\\
&=\frac{((n+1)(d-1)-1)!}{t^{(n+1)(d-1)-1}}\frac{t_{d-1}^{d-2}}{(d-2)!}\frac{(t_{2(d-1)}-t_{d-1})^{d-2}}{(d-2)!}\cdots\frac{(t-t_{n(d-1)})^{d-2}}{(d-2)!}\\
&=\frac{\Gamma((n+1)(d-1))}{(\Gamma(d-1))^{n+1}t^{(n+1)(d-1)-1}}\prod_{j=1}^{n+1}\tau_j^{d-2}
\end{align*}
with $\tau_j=t_{j(d-1)}-t_{(j-1)(d-1)},\,j=1,...,n+1,$ we obtain the law \eqref{eq:jointdis2}. For some details on these calculations see Lachal {\it et al.} (2006). The derivation of the Dirichlet distribution given here permits us to describe the random flight as a motion in $\mathbb{R}^d$ where a Poisson process governs the changes of orientation every $d-1$ events. In other words, every $d-1$ events of the Poisson process the moving particle changes direction ignoring all the previous $d-2$ events. For spaces of even dimension an analogous derivation of the second form \eqref{eq:jointdis3} of the Dirichlet law can be envisaged.

We introduce a random motion in $\mathbb{R}^3$ slightly different from that introduced in the previous Section. We suppose that the changes of direction are governed by an homogenous Poisson process. In particular, we assume that the particle changes direction (uniformly distributed on the surface of the sphere) only at even-valued Poisson events. Therefore, if the number of Poisson events is $N(t)=2n+1,\,n\geq 1$, we have that the position of the particle at time $t$ is represented by the following vector
\begin{align*}
&U_3(t)=c\sum_{k=1}^{n+1}(t_{2k}-t_{2k-2})\sin\theta_{2k-2}\sin\phi_{2k-2}=c\sum_{k=1}^{n+1}\tau_k\sin\theta_{2k-2}\sin\phi_{2k-2}\\
&U_2(t)=c\sum_{k=1}^{n+1}(t_{2k}-t_{2k-2})\sin\theta_{2k-2}\cos\phi_{2k-2}=c\sum_{k=1}^{n+1}\tau_k\sin\theta_{2k-2}\cos\phi_{2k-2}\\
&U_1(t)=c\sum_{k=1}^{n+1}(t_{2k}-t_{2k-2})\cos\theta_{2k-2}=c\sum_{k=1}^{n+1}\tau_k\cos\theta_{2k-2}
\end{align*}
where $t_k$ is the instant at which the $k$-th Poisson event happens with $t_0=0$ and $t_{2k+2}=t$. In view of the above considerations, for $d=3$, one has that
\begin{align*}
\frac{P\{T_2\in dt_2,T_4\in dt_4,...,T_{2n}\in dt_{2n}|N(t)=2n+1\}}{dt_2 dt_4\cdots dt_{2n}}
=\frac{(2n+1)!}{t^{2n+1}}\prod_{j=1}^{n+1}(t_{2j}-t_{2j-2})
\end{align*}
and then, the random flight $\underline{\bf U}_{3}(t)=(U_1(t),U_2(t),U_3(t)),t>0,$ has conditional characteristic function given by
\begin{align*}
&E\left\{e^{i<\underline{\bf \alpha}_{3},\underline{\bf U}_{3}(t)>}|N(t)=2n+1\right\}\\
&=\frac{(2n+1)!}{t^{2n+1}}\int_0^tt_2dt_2\cdots\int_{t_{2k-2}}^t(t_{2k}-t_{2k-2})(t-t_{2k})dt_{2k}\int_0^{2\pi}d\phi_0\cdots\int_0^{2\pi}\phi_{2n}\int_0^{\pi}d\theta_0\cdots\int_0^{2\pi}\theta_{2n}\notag\\
&\times\exp\bigg\{i\alpha_3c\sum_{k=1}^{n+1}(t_{2k}-t_{2k-2})\sin\theta_{2k-2}\sin\phi_{2k-2}+i\alpha_2c\sum_{k=1}^{n+1}(t_{2k}-t_{2k-2})\sin\theta_{2k-2}\cos\phi_{2k-2}\notag\\
&+i\alpha_1c\sum_{k=1}^{n+1}(t_{2k}-t_{2k-2})\cos\theta_{2k-2}\bigg\}\frac{\sin\theta_0\sin\theta_2\cdots\sin\theta_{2n}}{(4\pi)^{n+1}}=\text{(by using \eqref{intangle} for $d=3$)}\notag\\
&=\frac{(2n+1)!}{t^{2n+1}}\left(\frac{\pi}{2}\right)^{\frac{n+1}{2}}\int_0^tt_2dt_2\int_{t_2}^t(t_4-t_2)dt_4\cdots\int_{t_{2k-2}}^t(t_{2k}-t_{2k-2})(t-t_{2k})dt_{2k}\prod_{k=1}^{n+1}\frac{J_{\frac12}(c(t_{2k}-t_{2k-2})||\underline{\bf \alpha}_{3}||)}{\sqrt{c(t_{2k}-t_{2k-2})||\underline{\bf \alpha}_{3}||}}\notag\\
&=\frac{(2n+1)!}{t^{2n+1}}\left(\frac{\pi}{2}\right)^{\frac{n+1}{2}}\Bigg\{\int_0^t\tau_1d\tau_1\int_0^{t-\tau_1}\tau_2d\tau_2\cdots\int_0^{t-\sum_{k=1}^{n-1}\tau_k}\tau_n (t-\sum_{k=1}^{n}\tau_k)d\tau_n \prod_{k=1}^{n+1}\frac{J_{\frac12}(c\tau_k||\underline{\bf \alpha}_{3}||)}{\sqrt{c\tau_k||\underline{\bf \alpha}_{3}||}}\Bigg\}
\end{align*}
\begin{align*}
&=\frac{(2n+1)!}{t^{2n+1}}\left(\frac{\pi}{2}\right)^{\frac{n+1}{2}}\left\{\frac{2^{\frac n2+n+1}\Gamma(n+\frac32)t^{n+\frac12}}{\pi^{\frac{n+1}{2}}\Gamma(2n+2)(c||\underline{\bf \alpha}_{3}||)^{n+\frac12}}J_{n+\frac12}(ct||\underline{\bf \alpha}_{3}||)\right\}=\frac{2^{n+\frac12}\Gamma(n+\frac32)}{(ct||\underline{\bf \alpha}_{3}||)^{n+\frac12}}J_{n+\frac12}(ct||\underline{\bf \alpha}_{3}||)\notag
\end{align*}
where in the last step we have used the same approach as that developed in the proof of the result \eqref{cf}. Hence, the characteristic function of $\underline{\bf U}_{3}(t),t>0,$ (conditionally on the event $N(t)=2n+1$) coincides with \eqref{cf} for $d=3$. Then, by inverting the Fourier transform $E\left\{e^{i<\underline{\bf \alpha}_{3},\underline{\bf U}_{3}(t)>}|N(t)=2n+1\right\}$, we immediately obtain that
\begin{equation}\label{rfpoi}
\frac{P\{\underline{\bf U}_{3}(t)\in d\underline{\bf u}_{3}|N(t)=2n+1\}}{\prod_{j=1}^3du_j}=\frac{\Gamma(n+\frac 32)}{\pi^{\frac32}\Gamma(n)(ct)^{2n+1}}(c^2t^2-||\underline{\bf u}_3||^2)^{n-1},\quad n\geq 1,
\end{equation}
with $||\underline{\bf u}_3||<ct$, and coincides with the distribution \eqref{condlaw} for $d=3$. For $n=1$, the result \eqref{rfpoi} shows that we have an uniform distribution inside the sphere $\mathcal{H}_{ct}^3$. If $n=0$, that is $N(t)=1$, the random flight $\underline{\bf U}_{3}(t),t>0,$ (changing direction only at even-valued Poisson events) reaches the surface of $\mathcal{H}_{ct}^3$ with probability 
$
P\{\underline{\bf U}_{3}(t)\in \partial \mathcal{H}_{ct}^3\}=\lambda t e^{\lambda t}.
$

In this case, we can provide the following unconditional probability law
 \begin{align*}\frac{  P\left\{\underline{\bf U}_{3}(t)\in d\underline{\bf u}_{3},\bigcup_{n=1}^\infty (N(t)=2n+1)\right\}}{\prod_{j=1}^3du_j}
&=\sum_{n=1}^\infty P\{N(t)=2n+1\}\frac{P\{\underline{\bf U}_{3}(t)\in d\underline{\bf u}_{3}|N(t)=2n+1\}}{\prod_{j=1}^3du_j}\\
  &=\frac{e^{-\lambda t}}{\pi^{\frac 32}}\sum_{n=1}^\infty\frac{(\lambda t)^{2n+1}}{(2n+1)!}\frac{\Gamma(n+\frac 32)}{\Gamma(n)(ct)^{2n+1}}(c^2t^2-||\underline{\bf u}_3||^2)^{n-1}
\\
 &= \frac{e^{-\lambda t}}{\pi^{\frac 32}}\sum_{n=1}^\infty\frac{(\lambda t)^{2n+1}}{(2n+1)!}\frac{\sqrt{\pi}\Gamma(2n+2)2^{-2n-1}}{\Gamma(n+1)\Gamma(n)(ct)^{2n+1}}(c^2t^2-||\underline{\bf u}_3||^2)^{n-1}\\
 &= \frac{e^{-\lambda t}}{\pi}\left(\frac{\lambda}{2c}\right)^2\frac{1}{\sqrt{c^2t^2-||\underline{\bf u}_3||^2}}\sum_{n=1}^\infty\frac{\left(\frac{\lambda}{2c}\sqrt{c^2t^2-||\underline{\bf u}_3||^2}\right)^{2n-1}}{\Gamma(n+1)\Gamma(n)}\notag\\
  &= \frac{e^{-\lambda t}}{\pi}\left(\frac{\lambda}{2c}\right)^2\frac{1}{\sqrt{c^2t^2-||\underline{\bf u}_3||^2}}I_1\left(\frac{\lambda}{c}\sqrt{c^2t^2-||\underline{\bf u}_3||^2}\right)\notag
 \end{align*}
 
  Moreover, we obtain the projection of the absolutely continuous component of the distribution of $\underline{\bf U}_{3}(t),t>0,$ onto the plane as follows
  \begin{align*}
    \frac{P\{U_{1}(t)\in du_1,U_2(t)\in du_2|N(t)=2n+1\}}{du_1du_2}&=\int_{-\sqrt{c^2t^2- ||\underline{\bf u}_{2}||^2}}^{\sqrt{c^2t^2- ||\underline{\bf u}_{2}||^2}}\frac{P\{\underline{\bf U}_{3}(t)\in d\underline{\bf u}_{3}|N(t)=2n+1\}}{du_1du_2}\\
&= \frac{n+\frac12}{\pi(ct)^{2n+1}}
(c^2t^2- ||\underline{\bf u}_{2}||^2)^{n-\frac12}
  \end{align*}
for $n\geq 1$, while the projection of the uniform distribution on the surface $ \partial \mathcal{H}_{ct}^3$ onto the plane reads
$$ \frac{P\{U_{1}(t)\in du_1,U_2(t)\in du_2|N(t)=1\}}{du_1du_2}=\frac{1}{2\pi ct}\frac{1}{\sqrt{c^2t^2- ||\underline{\bf u}_{2}||^2}}.$$

 Then
 \begin{align*}
 & \frac{  P\{U_{1}(t)\in du_1,U_2(t)\in du_2,\bigcup_{n=0}^\infty (N(t)=2n+1)\}}{du_1du_2}\\
  &=\sum_{n=0}^\infty P\{N(t)=2n+1\} \frac{P\{U_{1}(t)\in du_1,U_2(t)\in du_2|N(t)=2n+1\}}{du_1du_2}\\
  &=\frac{e^{-\lambda t}}{\pi}\sum_{n=0}^\infty \frac{(\lambda t)^{2n+1}}{(2n+1)!}\frac{n+\frac12}{(ct)^{2n+1}}
(c^2t^2- ||\underline{\bf u}_{2}||^2)^{n-\frac12}
=\frac{e^{-\lambda t}}{\pi\sqrt{c^2t^2- ||\underline{\bf u}_{2}||^2}}\frac{\lambda}{2c}\sum_{n=0}^\infty \frac{1}{(2n)!}\left(\frac{\lambda}{c}\right )^{2n}
(\sqrt{c^2t^2- ||\underline{\bf u}_{2}||^2})^{2n}\\
&=\frac{\lambda e^{-\lambda t}}{2c\pi\sqrt{c^2t^2- ||\underline{\bf u}_{2}||^2}}\cosh\left(\frac{\lambda}{c}\sqrt{c^2t^2- ||\underline{\bf u}_{2}||^2}\right)
\end{align*}

We note that $q=q(u_1,u_2,t)= P\{U_{1}(t)\in du_1,U_2(t)\in du_2,\bigcup_{n=0}^\infty (N(t)=2n+1)\}$ is a solution to the planar telegraph equation
\begin{equation}
\frac {\partial^2 q}{\partial t^2}+2\lambda\frac {\partial q}{\partial t}=c^2\left\{\frac {\partial^2 }{\partial u_1^2}+\frac {\partial^2 }{\partial u_2^2}\right\}q
\end{equation}

Furthermore, we have that
  \begin{align*}
\frac{P\{U_{1}(t)\in du_1|N(t)=2n+1\}}{du_1}
&=\int_{-\sqrt{c^2t^2- u_1^2}}^{\sqrt{c^2t^2-u_1^2}}\frac{P\{U_{1}(t)\in du_1,U_2(t)\in du_2|N(t)=2n+1\}}{du_1}\\
&=  \frac{(2n+1)!}{(n!)^2(2ct)^{2n+1}}(c^2t^2-u_1^2)^n
  \end{align*}
  and then 
\begin{align}\label{protel}
  &\frac{  P\{U_1(t)\in du_1,\bigcup_{n=0}^\infty (N(t)=2n+1) \}}{du_1}
  = \frac{  P\{T(t)\in du_1,\bigcup_{n=0}^\infty (N(t)=2n+1)\}}{du_1}\\
  &=\sum_{n=0}^\infty P\{N(t)=2n+1\}\frac{P\{U_1(t)\in dx_1|N(t)=2n+1\}}{du_1}
  =e^{-\lambda t}\sum_{n=0}^\infty\frac{(\lambda t)^{2n+1}}{(n!)^2}\frac{1}{(2ct)^{2n+1}}(c^2t^2-u_1^2)^n\notag\\
  &=e^{-\lambda t}\frac{\lambda}{2c}I_{0}\left(\frac{\lambda}{2 c}\sqrt{c^2t^2- u_1^2}\right)
 =\frac{  P\{T(t)\in du_1,\bigcup_{n=0}^\infty (N(t)=2n+1) \}}{du_1}\notag
\end{align}
The result \eqref{protel} shows that the projection of $\underline{\bf U}_{3}(t),t>0,$ onto the real line is equivalent in distribution to the classical telegraph process $T(t),t>0.$

If the Poisson events recorded in $[0,t]$ are $2n$, we are not able to express in closed-form $P\{\underline{\bf U}_{3}(t)\in d\underline{\bf u}_{3}|N(t)=2n\}$.
Indeed, we have that 
 \begin{align*}
& \frac{P\{\underline{\bf U}_{3}(t)\in d\underline{\bf u}_{3}|N(t)=2n\}}{\prod_{j=1}^3du_j}\\
&=\frac{1}{(2\pi)^3}\int_{\mathbb{R}^3}e^{-i<\underline{\bf \alpha}_3,\underline{\bf u}_3>}E\left\{e^{i<\underline{\bf \alpha}_3,\underline{\bf U}_3(t)>}|N(t)=2n\right\}\prod_{j=1}^3d\alpha_j\\
  &=\frac{1}{(2\pi)^3}\int_{\mathbb{R}^3}e^{-i<\underline{\bf \alpha}_3,\underline{\bf u}_3>}\prod_{j=1}^3d\alpha_j\left(\frac\pi2\right)^{\frac{n+1}{2}}\frac{(2n)!}{t^{2n}}\int_0^t \tau_1d\tau_1\cdots\int_0^{t-\sum_{j=1}^{n-1}\tau_j}\tau_nd\tau_n\prod_{j=1}^{n+1}\frac{J_{\frac12}(c\tau_j||\underline\alpha_3||)}{\sqrt{c\tau_j||\underline\alpha_3||}}\\
  &=\frac{\left(\frac\pi2\right)^{\frac{n+1}{2}}}{(2\pi)^{\frac32}}\frac{(2n)!}{t^{2n}}\frac{1}{\sqrt{||\underline u_3||}}\int_0^\infty\rho^{\frac32}J_{\frac12}(\rho ||\underline u_3||)d\rho\int_0^t \tau_1d\tau_1\cdots\int_0^{t-\sum_{j=1}^{n-1}\tau_j}\tau_nd\tau_n\prod_{j=1}^{n+1}\frac{J_{\frac12}(c\tau_j||\underline\alpha_3||)}{\sqrt{c\tau_j||\underline\alpha_3||}}
 \end{align*}
Hence, to evaluate explicitely the distribution  $P\{\underline{\bf U}_{3}(t)\in d\underline{\bf u}_{3}|N(t)=2n\}$, we need to calculate the integrals of the following form
\begin{align*}
\int_0^a\frac{\sqrt{x}}{\sqrt{a-x}}J_{\frac12}(x)J_{\frac12}(a-x)dx&=\sum_{m=0}^\infty\sum_{r=0}^\infty\frac{(-1)^{m+r}a^{2m+2r+2}}{m!r!\Gamma(m+\frac12+1)\Gamma(r+\frac12+1)}\frac{1}{2^{2r+2m+1}}\int_0^ay^{2m+1}(a-y)^{2r}dy\\
&=\sum_{m=0}^\infty\sum_{r=0}^\infty\frac{(-1)^{m+r}a^{2m+2r+2}}{m!r!\Gamma(m+\frac12+1)\Gamma(r+\frac12+1)}\frac{1}{2^{2r+2m+1}}\frac{\Gamma(2m+2)\Gamma(2r+1)}{\Gamma(2(m+r)+3)}\\
&=\frac{2}{\pi}\sum_{m=0}^\infty\sum_{r=0}^\infty\frac{(-1)^{r+m}a^{2(m+r)+2}}{(2r+1)\Gamma(2(m+r)+3)}\\
&=\frac{2}{\pi}\sum_{m=0}^\infty\sum_{r=m}^\infty\frac{(-1)^{r}a^{2r+2}}{(2(r-m)+1)\Gamma(2r+3)}
=\frac{2}{\pi}\sum_{r=0}^\infty\frac{(-1)^{r}a^{2r+2}}{\Gamma(2r+3)}\sum_{m=0}^r\frac{1}{2m+1}
\end{align*}
 Unluckily, the above integral cannot be worked out explicitly and then the recursive approach used in the proof of Theorem \ref{th1} cannot be applied. Nevertheless, if $N(t)=2$, since $J_\frac12(x)=\sqrt{\frac{2}{\pi x}}\sin x$, we have that
 \begin{align*}
 P\{\underline{\bf U}_{3}(t)\in d\underline{\bf u}_{3}|N(t)=2\}=\frac{du_1du_2du_3}{\pi^2(ct)^2}\frac{1}{||\underline {\bf u}_3||}\int_0^t\frac{d\tau_1}{t-\tau_1}\int_0^\infty\frac{\sin(\rho||\underline{\bf u}_3||)\sin(c\tau_1\rho)\sin(c(t-\tau_1)\rho)}{\rho}d\rho
 \end{align*}
 The integral with respect to $\rho$ can be treated as in Orsingher and De Gregorio (2007). Indeed, since
\begin{equation*}
\sin x\sin y\sin z=\frac{1}{2^2}[\sin
(z+x-y)+\sin(z-x+y)-\sin(z+x+y)-\sin(z-x-y)]
\end{equation*}
and in light of the remarkable fact that
\begin{equation*}
\int_0^\infty\frac{\sin(A\rho)}{\rho}d\rho=
\begin{cases}
\frac{\pi}{2},& A>0,\\
-\frac{\pi}{2},& A<0,
\end{cases}
\end{equation*}  
the integral becomes
\begin{align*}
&\frac{1}{2^2}\int_0^\infty\frac{d\rho}{\rho}\bigg\{\sin(\rho(ct-||\underline{\bf u}_3||))+\sin(\rho(ct-2cs_1+||\underline{\bf u}_3||))
-\sin(\rho(ct+||\underline{\bf u}_3||))-\sin(\rho(ct-2cs_1-||\underline{\bf u}_3||))\bigg\}\\
&=\frac{\pi}{2^2}{\bf 1}_{\left[\frac{ct-||\underline {\bf u}_3||}{2c},\frac{ct+||\underline {\bf u}_3||}{2c}	\right]}(\tau_1)
\end{align*}
Then
 \begin{equation*}
 P\{\underline{\bf U}_{3}(t)\in d\underline{\bf u}_{3}|N(t)=2\}=\frac{\prod_{j=1}^{3}du_j}{\pi(2ct)^2}\frac{1}{||\underline {\bf u}_3||}\log\left(\frac{ct+||\underline{\bf u}_3||}{ct-||\underline{\bf u}_3||}\right),\quad ||\underline{\bf u}_3||\leq ct,
 \end{equation*}
 which corresponds to the law obtained in $\mathbb{R}^3$, for $n=1$, in Orsingher and De Gregorio (2007).

{\footnotesize

}

\end{document}